%%%%%%%%%%%%%%%%%%%%%%%%%%%%%%%%%%%%%%%%%%%%%%%%%%%%%%%%%%%%%%%%%%
%%%%%%%%%%%%%%%%%%%%%%%%%%%  PREAMBLE  %%%%%%%%%%%%%%%%%%%%%%%%%%%
%%%%%%%%%%%%%%%%%%%%%%%%%%%%%%%%%%%%%%%%%%%%%%%%%%%%%%%%%%%%%%%%%%

\documentclass[12pt, reqno, twoside, letterpaper]{amsart}
\usepackage{square-totients}

%%%%%%%%%%%%%%%%%%%%%%%%%%%%%%%%%%%%%%%%%%%%%%%%%%%%%%%%%%%%%%%%%%
%%%%%%%%%%%%%%%%%%%%%%%%%%% TITLE  ETC %%%%%%%%%%%%%%%%%%%%%%%%%%%
%%%%%%%%%%%%%%%%%%%%%%%%%%%%%%%%%%%%%%%%%%%%%%%%%%%%%%%%%%%%%%%%%%

\title[A note on square totients]
      {A note on square totients}

\author[T.\ Freiberg]{Tristan Freiberg}

\address{Department of Mathematics, 
         University of Missouri, 
         Columbia MO, USA.}

\email{freibergt@missouri.edu}

\author[C.\ Pomerance]{Carl Pomerance}

\address{Department of Mathematics, 
         Dartmouth College, 
         Hanover NH, USA.}

\email{carl.pomerance@dartmouth.edu}

\date{\today}

%%%%%%%%%%%%%%%%%%%%%%%%%%%%%%%%%%%%%%%%%%%%%%%%%%%%%%%%%%%%%%%%%%
%%%%%%%%%%%%%%%%%%%%%%%%%%%  DOCUMENT  %%%%%%%%%%%%%%%%%%%%%%%%%%%
%%%%%%%%%%%%%%%%%%%%%%%%%%%%%%%%%%%%%%%%%%%%%%%%%%%%%%%%%%%%%%%%%%

\begin{document}

%%%%%%%%%%%%%%%%%%%%%%%%%%%%%%%%%%%%%%%%%%%%%%%%%%%%%%%%%%%%%%%%%%
%%%%%%%%%%%%%%%%%%%%%%%%%%%% ABSTRACT %%%%%%%%%%%%%%%%%%%%%%%%%%%%
%%%%%%%%%%%%%%%%%%%%%%%%%%%%%%%%%%%%%%%%%%%%%%%%%%%%%%%%%%%%%%%%%%

\begin{abstract}
A well-known conjecture asserts that there are infinitely many 
primes $p$ for which $p - 1$ is a perfect square.
We obtain upper and lower bounds of matching order on the 
number of pairs of distinct primes $p,q \le x$ for which 
$(p - 1)(q - 1)$ is a perfect square.
\end{abstract}

\maketitle

%%%%%%%%%%%%%%%%%%%%%%%%%%%%%%%%%%%%%%%%%%%%%%%%%%%%%%%%%%%%%%%%%%
%%%%%%%%%%%%%%%%%%%%%%%%%%%  SECTION 1 %%%%%%%%%%%%%%%%%%%%%%%%%%%
%%%%%%%%%%%%%%%%%%%%%%%%%%%%%%%%%%%%%%%%%%%%%%%%%%%%%%%%%%%%%%%%%%

\section{Introduction}
 \label{sec:1}

The first of ``Landau's problems'' on primes is to show that there 
are infinitely many primes $p$ for which $p - 1=\oldsquare$, that 
is, a perfect square.
Heuristics \cite{BH, SHA} suggest that 
\[
 \#\{p \le x : p - 1 = \oldsquare\} 
  \sim 
  {\textstyle \frac{1}{2} }
    \mathfrak{S} \int_2^{\sqrt{x}} \frac{\dd{t}}{\log t}
     \qquad (x \to \infty), 
\]
where 
$
 \mathfrak{S}
  \defeq
   \prod_{p > 2}\big(1 - (-1/p)/(p - 1)\big)
$
and $(-1/\cdot)$ is the Legendre symbol.
The problem being as unassailable now as it was in 1912 when 
Landau compiled his famous list, we consider the problem of 
counting pairs $(p,q)$ of distinct primes for which 
$(p - 1)(q - 1) = \oldsquare$. 

Let $\PP$ denote the set of all primes and let 
\[
  \SS  
  \defeq 
    \Br{
        (p,q) \in \PP \times \PP : p \ne q \,\, \hbox{and} \,\, 
        (p - 1)(q - 1) = \oldsquare  
       }.       
\]
For $x \ge 2$, let 
\[
  \SS(x) \defeq \#\{(p,q) \in \SS : p,q \le x\},
\]

\begin{theorem}
 \label{thm:1}
There exist absolute constants $c_2 > c_1 > 0$ such that for all 
$x \ge 5$, 
\[
 c_1x/\log x < \SS(x) < c_2x/\log x.
\]
\end{theorem}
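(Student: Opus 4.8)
The plan rests on the observation that $(p-1)(q-1)=\oldsquare$ if and only if $p-1$ and $q-1$ have the same squarefree kernel. For squarefree $a$ set $N_a(x)\defeq\#\{m\ge1:am^2+1\le x,\ am^2+1\in\PP\}$; since the squarefree kernel of $am^2$ equals $a$ whenever $a$ is squarefree, $N_a(x)$ is exactly the number of primes $p\le x$ whose shifted value $p-1$ has squarefree kernel $a$, and
\[
\SS(x)=\sum_{a}N_a(x)\bigl(N_a(x)-1\bigr),
\]
the sum over squarefree $a\le x-1$. Note $\sum_a N_a(x)=\pi(x)$, since every prime $p\le x$ equals $am^2+1$ for a unique squarefree $a$ and a unique $m\ge1$. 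So the theorem amounts to matching bounds for this bilinear sum; the ``main term'' of size $x/\log x$ is a logarithm larger than the mean square of a random split of $\pi(x)$, the extra logarithm coming from the summation over the many admissible $a$.

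\emph{Upper bound.} Bound $\SS(x)\le\sum_a N_a(x)^2$ and split at $a=\sqrt x$. A standard upper-bound sieve for the quadratic $am^2+1$ gives $N_a(x)\ll\mathfrak{S}_a\sqrt{x/a}/\log(2x/a)$, where $\mathfrak{S}_a$ is the associated sieve density. The one subtlety is that $\mathfrak{S}_a$ may carry a factor of order $1/L(1,(-a/\cdot))$, which is harmless since $\sum_{a\le y}\mathfrak{S}_a^2/a\ll\log y$ — by Cauchy--Schwarz together with the bounds $\sum_{a\le t}L(1,\chi_{-a})^{-k}\ll_k t$. Hence for $a\le\sqrt x$, where $\log(2x/a)\gg\log x$, we get $\sum_{a\le\sqrt x}N_a(x)^2\ll(x/\log^2 x)\sum_{a\le\sqrt x}\mathfrak{S}_a^2/a\ll x/\log x$. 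For $a>\sqrt x$ we have $N_a(x)\le\sqrt{x/a}<x^{1/4}$, so reparametrising by the square-root variables, $\sum_{a>\sqrt x}N_a(x)^2=\#\{(a,m,n):a>\sqrt x,\ m,n<x^{1/4},\ am^2+1,\,an^2+1\in\PP,\ \le x\}$: the diagonal $m=n$ contributes, by a one-linear-form sieve in $a$ summed over $m<x^{1/4}$, at most $\ll\sum_{m<x^{1/4}}(x/m^2)/\log x\ll x/\log x$, while the off-diagonal part, by a two-linear-form upper-bound sieve in $a$, contributes $\ll(x/\log^2 x)\sum_{n<x^{1/4}}n^{-2}\sum_{m<n}\mathfrak{S}(m,n)\ll(x/\log^2 x)\sum_n n^{-1}\ll x/\log x$. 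Altogether $\SS(x)\ll x/\log x$.

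\emph{Lower bound.} This is the harder half. A natural but insufficient first move is to keep only pairs with $q-1=\ell^2(p-1)$ for an integer $\ell\ge2$, giving $\SS(x)\ge\#\{(p,\ell):p\in\PP,\ \ell\ge2,\ (p-1)\ell^2+1\le x,\ (p-1)\ell^2+1\in\PP\}$; but this count is only of size $x/\log^2 x$, and is of twin-prime strength for each fixed $\ell$. The expected size $x/\log x$ of $\SS(x)$ is carried instead by the ``spread-out'' pairs $p-1=am^2$, $q-1=an^2$ with $m\nmid n$, across dyadic ranges $a\sim A$ with $A$ up to a small power of $x$. On each such range one can prove, using only the prime number theorem in progressions and an elementary squarefree sieve, the first-moment estimate $\sum_{a\sim A}N_a(x)\gg A/\log A$ (for instance, when $a\in(A,2A]$ is odd and squarefree with $4a+1\in\PP$, the prime $4a+1$ lies in class $a$). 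The obstacle is upgrading this to the second-moment bound $\sum_{a\sim A}N_a(x)\bigl(N_a(x)-1\bigr)\gg A/\log A$ — equivalently, showing that a positive proportion of the primes so produced lie in a square class already occupied by another prime $\le x$, i.e. producing $\gg x/\log x$ collisions among the kernels of $p-1$.

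Since no fixed linear relation between the two primes can deliver more than $\ll x/\log^2 x$, the needed saving must come on average over an auxiliary variable. I would seek it via Bombieri--Vinogradov (or a large-sieve input), with $\ell^2$ or $p-1$ in the role of the modulus: the goal is to show that for $\gg x/\log x$ primes $q\le x$ the number $q-1$ has a divisor $d$ with $d+1\in\PP$ and $(q-1)/d$ a perfect square $>1$, each such $d$ yielding the partner $p=d+1$ and hence a pair in $\SS$. Summing the resulting lower bounds over dyadic ranges $a\sim A$ with $A\le x^{1-\delta}$ reinstates the factor $\log x$ and gives $\SS(x)\gg x/\log x$. The crux, and where I expect the real work, is precisely this averaged estimate, which trades an individually hopeless prime-pair count for an accessible sum over moduli.
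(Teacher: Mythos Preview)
Your upper-bound outline is correct and is essentially the paper's argument: split on the size of the squarefree kernel $a$, for small $a$ bound $N_a(x)$ by an upper-bound sieve and control $\sum_a \mathfrak{S}_a^2/a\ll\log x$ via Cauchy--Schwarz and the moment estimate $\sum_{a\le t}L(1,\chi_{-a})^{-4}\ll t$, and for large $a$ swap to summation over $(m,n)$ and apply a two-dimensional sieve in $a$ together with the average $\sum_{m<n}(n^2-m^2)/\phi(n^2-m^2)\ll n$. The paper splits at $x^{2/3}$ rather than $\sqrt{x}$, but this is cosmetic.

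The lower bound, however, is not a proof but a restatement of the difficulty. You correctly observe that any fixed relation $q-1=\ell^2(p-1)$ yields only $x/(\log x)^2$, and you say you would seek an averaged estimate via Bombieri--Vinogradov, but you do not supply one; moreover your declared target $\sum_{a\sim A}N_a(x)(N_a(x)-1)\gg A/\log A$ is both unproved and insufficient, since a geometric sum over dyadic $A\le x^{1-\delta}$ is dominated by its top term and gives only $x^{1-\delta}/\log x$, not $x/\log x$. The missing idea is to slice dyadically in the \emph{square-root variable} $n$, not in $a$. For a window $I_y=[y/2,y)$ with $(\log x)^2\le y\le x^{1/6}$, set $N_{I_y}(a)=\#\{n\in I_y:an^2+1\in\PP\}$ and apply Cauchy--Schwarz:
\[
\sum_{a\le x/y^2}\mu(a)^2N_{I_y}(a)^2\;\ge\;\frac{y^2}{x}\Bigl(\sum_{a\le x/y^2}\mu(a)^2N_{I_y}(a)\Bigr)^{2}.
\]
After swapping summation the first moment becomes $\sum_{n\in I_y}\#\{a\le x/n^2:\mu(a)^2=1,\ an^2+1\in\PP\}$, a sum of primes-in-progressions counts to moduli $n^2\le x^{1/3}$ with a squarefree-quotient condition; this is $\gg x/(y\log x)$ by the prime number theorem in arithmetic progressions, with the $O(1)$ exceptional moduli above $\log x$ (from possible Siegel zeros) removed at negligible cost. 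Cauchy--Schwarz then yields $\sum_a N_{I_y}(a)^2\gg x/(\log x)^2$, while the diagonal $\sum_a N_{I_y}(a)\ll x/(y\log x)=o(x/(\log x)^2)$ for $y\ge(\log x)^2$; hence each window contributes $\gg x/(\log x)^2$ off-diagonal pairs, and summing over the $\asymp\log x$ windows gives $\SS(x)\gg x/\log x$. The crucial point you are missing is that slicing in $n$ makes the first moment a \emph{linear} problem in $a$ (primes in a progression), whereas slicing in $a$ leaves you, for each individual $a$, with the quadratic problem of counting $m$ with $am^2+1$ prime --- Landau's problem again.
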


We remark that the lower bound $\SS(x) \gg x/\log x$ gives 
\[
 \SS'(x)
  \defeq 
   \#\{n \le x : n = pq, (p,q) \in \SS\}
    \ge 
    {\textstyle \frac{1}{2} }
      \SS(\sqrt{x})
       \gg 
        \sqrt{x}/\log x,
\]
improving on the bound 
$
 \SS'(x) \gg \sqrt{x}/(\log x)^4 
$
of the first author \cite[Theorem 1.2]{FRE}, and independently, 
\cite{BL}.  
Let $\phi$ denote Euler's function.  
Note that for primes $p,q$ we have $\phi(pq) = \oldsquare$ if and 
only if $(p,q) \in \SS$.  
The distribution of integers $n$ with $\phi(n) = \oldsquare$ has 
been considered recently also in \cite{BFPS} and
\cite[Section 4.8]{DKL}, while the distribution of 
integers $n$ with $n^2$ a totient (that is, a value of $\phi$) has 
been considered in \cite{PP}.  
We remark that our proof goes over with trivial 
modifications to the case of $(p + 1)(q + 1) = \oldsquare$, that 
is, $\sigma(pq) = \oldsquare$, where $\sigma$ is the 
sum-of-divisors function.
A similar result is to be expected for solutions to 
$(p + b)(q + b) = \oldsquare$ for any fixed nonzero integer $b$.

In \cite{BL, FRE} solutions to $(p - 1)(q - 1)(r - 1) = m^3$ are 
also considered, where $p,q,r$ are distinct primes, and more 
generally $\phi(n) = m^k$, where $n$ is the product of $k$ 
distinct primes.
In \cite{BL}, the authors show that if the primes in $n$ are 
bounded by $x$, there are at least $c_kx/(\log x)^{2k}$ solutions, 
while in \cite{FRE}, it is shown that there are at least 
$c_kx/(\log x)^{k + 2}$ solutions.
Our lower bound construction in the present paper can be 
extended to give at least $c_kx/(\log x)^{k - 1}$ solutions.
We do not have a matching upper bound when $k \ge 3$.

In addition to notation already introduced, $p,q$ will always
denote primes, 
$\ind{\PP}$ denotes the indicator function of $\PP$,
\[
 \pi(x) \defeq \sum_{p \le x} 1, 
  \quad 
   \pi(x;k,b) \defeq \sums[p \le x][p \equiv b \bmod k] 1,
\]
$\Lambda$ denotes the von Mangoldt function, 
$\mu$ denotes the M\"obius function,
$\omega(n)$ denotes the number of distinct prime divisors of $n$, 
and $(D/\cdot)$ denotes the Legendre/Kronecker symbol.
Note that $A = \Oh[B]$, $A \ll B$ and $B \gg A$ all indicate that 
$|A| \le c|B|$ for some absolute constant $c$, $A \asymp B$ means 
$A \ll B \ll A$, $A = \Oh[B][\alpha]$ and $A \ll_{\alpha} B$ 
denote that $|A| \le c(\alpha)|B|$ for some constant $c$ depending 
on $\alpha$, and $A \asymp_{\alpha} B$ denotes that 
$A \ll_{\alpha} B \ll_{\alpha} A$.
Also, $A = \oh[B]$ indicates that $|A| \le c(x)|B|$ for some 
function $c(x)$ of $x$ that goes to zero as $x$ tends to infinity.

%%%%%%%%%%%%%%%%%%%%%%%%%%%%%%%%%%%%%%%%%%%%%%%%%%%%%%%%%%%%%%%%%%
%%%%%%%%%%%%%%%%%%%%%%%%%%%  SECTION 2 %%%%%%%%%%%%%%%%%%%%%%%%%%%
%%%%%%%%%%%%%%%%%%%%%%%%%%%%%%%%%%%%%%%%%%%%%%%%%%%%%%%%%%%%%%%%%%

\section{Auxiliary lemmas}
 \label{sec:2}

We will use the following bounds in the proof of 
Theorem \ref{thm:1}. 

\begin{lemma}
 \label{lem:2.1}
\textup{(}i\textup{)} If $x \ge 2$ and $d \ge 1$ then
\[
 \sum_{n \le x} \frac{1}{\phi(n)} \ll \log x,
\quad \sum_{n>x}\frac1{\phi(n^2)}\asymp\frac1x,
  \quad 
   \text{and}
    \quad 
     \sums[n > x][d \mid n^2] \frac{1}{\phi(n^2)}
      \ll 
       \frac{d^{1/2}}{\phi(d)x}.
\]
\textup{(}ii\textup{)}
If $n \ge 2$ then 
\[
 \sum_{m < n}
  \frac{n^2 - m^2}{\phi(n^2 - m^2)}
   \ll
    n.
\]
\end{lemma}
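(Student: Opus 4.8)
The plan is to deduce all the estimates from the identity $n/\phi(n)=\sum_{d\mid n}\mu^2(d)/\phi(d)$, equivalently $1/\phi(n^2)=n^{-2}\sum_{d\mid n}\mu^2(d)/\phi(d)$, together with the convergence of the Euler product $\sum_d\mu^2(d)/(\phi(d)d)=\prod_p(1+1/(p(p-1)))$. For the first two estimates of \textup{(i)}, insert the identity and interchange the order of summation: $\sum_{n\le x}1/\phi(n)=\sum_{d\le x}\frac{\mu^2(d)}{\phi(d)d}\sum_{m\le x/d}\frac1m\ll\log x\sum_d\frac{\mu^2(d)}{\phi(d)d}\ll\log x$, while $\sum_{n>x}1/\phi(n^2)=\sum_d\frac{\mu^2(d)}{\phi(d)d^2}\sum_{m>x/d}\frac1{m^2}\ll\sum_d\frac{\mu^2(d)}{\phi(d)d(x+d)}\ll\frac1x$, the last step by splitting the $d$-sum at $d=x$. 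The matching lower bound $\sum_{n>x}1/\phi(n^2)\gg1/x$ follows from $\phi(n^2)=n\phi(n)\le n^2$ summed over $x<n\le2x$.

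For the third estimate the one point that takes an idea is that $d\mid n^2$ is equivalent to $e\mid n$, where $e=e(d):=\prod_{p^a\|d}p^{\lceil a/2\rceil}$. Since $e$ and $d$ have the same set of prime divisors, $\phi(e)/e=\phi(d)/d$, and therefore $\phi(d)/\phi(e)=d/e=\prod_{p^a\|d}p^{\lfloor a/2\rfloor}\le d^{1/2}$, i.e.\ $1/\phi(e)\le d^{1/2}/\phi(d)$. Writing $n=em$ and using $\phi(e^2m^2)\ge\phi(e^2)\phi(m^2)=e\phi(e)\phi(m^2)$,
\[
 \sum_{n>x,\,d\mid n^2}\frac1{\phi(n^2)}=\sum_{m>x/e}\frac1{\phi(e^2m^2)}\le\frac1{e\phi(e)}\sum_{m>x/e}\frac1{\phi(m^2)}\ll\frac1{e\phi(e)}\cdot\frac{e}{x}\ll\frac{d^{1/2}}{\phi(d)x},
\]
the middle inequality by the second estimate (or trivially, bounding $\sum_{m\ge1}1/\phi(m^2)=O(1)$, when $x<2e$).

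For \textup{(ii)}, substitute $k=n-m$ to rewrite the sum as $\sum_{1\le k\le n-1}N_k/\phi(N_k)$ with $N_k:=k(2n-k)$ (if the range includes $m=0$, that term equals $n^2/\phi(n^2)\ll\log\log n$ and is harmless). Since $N_k<2n^2$, inserting the identity and interchanging gives
\[
 \sum_{k=1}^{n-1}\frac{N_k}{\phi(N_k)}=\sum_{d<2n^2}\frac{\mu^2(d)}{\phi(d)}\,\#\{1\le k\le n-1:d\mid k(2n-k)\}.
\]
For squarefree $d$ the number of $k\bmod d$ with $d\mid k(2n-k)$ is multiplicative in $d$, and at a prime $p$ it is the number of solutions of $k(2n-k)\equiv0\pmod p$, which is $1$ if $p\mid2n$ and $2$ otherwise; hence it is at most $2^{\omega(d)}$, and the inner count is at most $2^{\omega(d)}(n/d+1)$. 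Consequently the sum is
\[
 \le n\sum_{d}\frac{\mu^2(d)2^{\omega(d)}}{\phi(d)d}+\sum_{d<2n^2}\frac{\mu^2(d)2^{\omega(d)}}{\phi(d)}\ll n\prod_p\Bigl(1+\frac{2}{p(p-1)}\Bigr)+(\log n)^2\ll n,
\]
where the bound $(\log n)^2$ for the second sum is Mertens' theorem ($\sum_{d\le D}\mu^2(d)2^{\omega(d)}/\phi(d)\asymp(\log D)^2$) and the first Euler product converges since $2/(p(p-1))\ll p^{-2}$.

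The argument has no real obstacle; the only genuinely new ingredient is the reduction $d\mid n^2\iff e(d)\mid n$ in \textup{(i)} together with the exact identity $\phi(d)/\phi(e(d))=d/e(d)\le d^{1/2}$. For \textup{(ii)} the point to be careful about is to use the \emph{uniform} bound $2^{\omega(d)}$ for the number of roots of $k(2n-k)\equiv0\pmod d$, which makes the main $d$-sum converge to a constant independent of $n$; a term-by-term application of $N/\phi(N)\ll\log\log N$ would only give $\ll n\log\log n$, which is too weak.
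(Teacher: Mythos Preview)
Your proof is correct. Part~(i) matches the paper's argument essentially verbatim: your integer $e(d)=\prod_{p^a\|d}p^{\lceil a/2\rceil}$ coincides with the paper's $d_1d_2$ (where $d=d_1d_2^2$ with $d_1$ squarefree), and the case split $e\le x/2$ versus $e>x/2$ is the same one the paper makes.

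In part~(ii) there is a small tactical difference worth noting. The paper first truncates the identity $k/\phi(k)=\sum_{d\mid k}\mu^2(d)/\phi(d)$ to divisors with $d^2\le k$, showing the discarded tail is $O(k^{-1/3})$ via $2^{\omega(k)}=k^{o(1)}$ and $d/\phi(d)\ll\log\log d$; since $k=n^2-m^2<n^2$ this keeps the $d$-range at $d<n$, so the inner count is $\ll 2^{\omega(d)}n/d$ with no additive $+1$, and only the convergent sum $\sum_d\mu^2(d)2^{\omega(d)}/(d\phi(d))$ appears. You instead let $d$ run all the way to $2n^2$, bound the count by $2^{\omega(d)}(n/d+1)$, and handle the extra piece via $\sum_{d<2n^2}\mu^2(d)2^{\omega(d)}/\phi(d)\ll(\log n)^2$. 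Both routes work; yours is slightly more direct and avoids the truncation lemma, while the paper's truncation avoids needing the Mertens-type $(\log n)^2$ estimate.
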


\begin{proof}
(i) We have 
$
 \sum_{n \le x} 1/n \le 1 + \int_1^n \dd{t}/t = 1 + \log x,
$
and the first bound follows by using the identity
$
 n/\phi(n) = \sum_{m \mid n} \mu(m)^2/\phi(m)
$
and switching the order of summation.
The second bound follows similarly, noting that
$\sum_{n > x^2} 1/n^2 \asymp 1/x$ and that 
$\phi(n^2) = n\phi(n)$.
For the third bound, write $d = d_1d_2^2$, where $d_1$ is
squarefree, and note that $d\mid n^2$ if and only if 
$d_1d_2 \mid n$.
Thus, 
\begin{equation}
 \label{eq:2.1}
  \sums[n > x][d\mid n^2]
   \frac{1}{\phi(n^2)}
  = \sums[n > x][d_1d_2 \mid n]
     \frac{1}{n\phi(n)}
      \le
       \frac1{d_1d_2\phi(d_1d_2)}
        \sum_{m > x/(d_1d_2)}
         \frac1{\phi(m^2)}.
\end{equation}
If $d_1d_2 \le x/2$, this last sum is, by the second part, 
$\Oh[d_1d_2/x]$, leading to 
\[
 \sums[n > x][d \mid n^2]
  \frac{1}{\phi(n^2)}
   \ll 
    \frac{1}{\phi(d_1d_2)x}
   = \frac{d}{\phi(d)d_1d_2x}
      \le
       \frac{d^{1/2}}{\phi(d)x}.
\]
Finally, if $d_1d_2 > x/2$, the last sum in \eqref{eq:2.1} is 
$\Oh[1]$, leading to 
\[
 \sums[n > x][d \mid n^2] 
  \frac{1}{\phi(n^2)}
   \ll 
    \frac{1}{d_1d_2\phi(d_1d_2)}
     \ll 
      \frac{1}{x\phi(d_1d_2)}
       \le
        \frac{d^{1/2}}{\phi(d)x}.
\]

(ii) For any positive integer $k$ we have
\[
 \frac{k}{\phi(k)}
  = 
   \sums[d \mid k][d^2 \le k]
    \frac{\mu(d)^2}{\phi(d)}
     + 
      \sums[d \mid k][d^2 > k]
       \frac{\mu(d)^2}{\phi(d)}
  =
   \sums[d \mid k][d^2 \le k]
    \frac{\mu(d)^2}{\phi(d)}
   + \Oh[k^{-1/3}]
  \ll
   \sums[d \mid k][d^2 \le k]
    \frac{\mu(d)^2}{\phi(d)},
\]
using the elementary bounds 
\[
 d/\phi(d) \ll \log\log(3d) 
 \quad \text{and} \quad 
 \textstyle
  \sum_{d \mid k} \mu(d)^2 
  = 2^{\omega(k)} 
  = k^{\Oh(1/\log\log k)}.
\]
Thus,
\[
 \sum_{m < n} 
  \frac{n^2 - m^2}{\phi(n^2 - m^2)}
   \ll
    \sum_{m < n} 
     \hspace{3pt}
      \sums[d \mid n^2 - m^2][d < n]
       \frac{\mu(d)^2}{\phi(d)}
        =
         \sum_{d < n}
          \frac{\mu(d)^2}{\phi(d)}
           \sums[m < n][d \mid n^2 - m^2]
            1.
\]
If $d$ is squarefree and $d \mid n^2 - m^2$, then 
$d = d_1d_2$ for some $d_1,d_2$ with 
$n + m \equiv 0 \bmod d_1$ and
$n - m \equiv 0 \bmod d_2$.
These congruences are satisfied by a unique $m$ modulo 
$d_1d_2 = d$, and there are $2^{\omega(d)}$ ways of writing a 
squarefree integer $d$ as an ordered product of $2$ positive 
integers.
Hence
\[
 \sum_{d < n}
  \frac{\mu(d)^2}{\phi(d)}
   \sums[m < n][d \mid n^2 - m^2] 1
    = 
     \sum_{d < n} 
      \frac{\mu(d)^2}{\phi(d)}
       \sum_{d_1d_2 = d}
        \sums[m < n]
             [d_1 \mid n + m]
             [d_2 \mid n - m] 
             1  
        \ll 
         n\sum_{d < n}
           \frac{\mu(d)^2 2^{\omega(d)}}{d\phi(d)} 
            \ll n.
\]
\end{proof}

We will need uniform bounds for $\pi(x;k,b)$ for $k$ up to a small 
power of $x$. 
The following form of the Brun--Titchmarsh inequality is a 
consequence of a sharp form of the large sieve inequality due to 
Montgomery and Vaughan \cite{MV}.

\begin{lemma} 
 \label{lem:2.2}
If $1 \le k < x$ and $(b,k) = 1$ then
\[
 \pi(x;k,b) < \frac{2x}{\phi(k)\log(x/k)}.
\]
\end{lemma}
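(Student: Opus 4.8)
The plan is to deduce this directly from the arithmetic (large-sieve) inequality of Montgomery and Vaughan; indeed the displayed bound is essentially their uniform Brun--Titchmarsh theorem, so at a minimum one cites \cite{MV}. For completeness, here is the shape of the argument. We may assume $1 \le b \le k$. Writing each prime $p \le x$ with $p \equiv b \bmod k$ as $p = b + mk$, the parameter $m$ runs over an interval of at most $x/k + 1$ consecutive nonnegative integers. Fix a sifting level $Q \ge 2$ and let $S$ be the set of such $m$ with $b + mk > Q$. For each prime $q \le Q$ with $q \nmid k$, the condition that $b + mk$ be prime forces $b + mk \not\equiv 0 \bmod q$, i.e.\ $m$ avoids one residue class modulo $q$; for $q \mid k$ there is nothing to sieve, since $(b,k) = 1$ already implies $q \nmid b + mk$.

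The arithmetic large sieve applied to $S$ then yields
\[
 |S| \le \frac{N + Q^2}{L}, \qquad N \le \frac{x}{k} + 1, \qquad
 L \defeq \sums[q \le Q][(q,k)=1] \frac{\mu(q)^2}{\phi(q)}
        \ge \frac{\phi(k)}{k}\sum_{q \le Q}\frac{\mu(q)^2}{\phi(q)}
        \ge \frac{\phi(k)}{k}\log Q,
\]
where the second inequality comes from writing each squarefree $q \le Q$ as $de$ with $d$ composed of primes dividing $k$ and $(e,k) = 1$ (so $e \le Q$ is counted in $L$), together with $\sum_{d \mid k} \mu(d)^2/\phi(d) = k/\phi(k)$, and the third is the familiar bound $\sum_{q \le Q}\mu(q)^2/\phi(q) \ge \sum_{n \le Q} 1/n \ge \log Q$. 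Since $\pi(x;k,b) \le |S| + \pi(Q)$, choosing $Q$ just below $(x/k)^{1/2}$, say $Q = (x/k)^{1/2}/(\log(x/k))^2$, makes $N + Q^2 = (x/k)(1 + \oh[1])$ and $\log Q = \tfrac12\log(x/k)(1 + \oh[1])$ while $\pi(Q)$ is negligible, giving $\pi(x;k,b) \le 2x/(\phi(k)\log(x/k)) \cdot (1 + \oh[1])$ whenever $x/k \to \infty$.

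The main obstacle is upgrading this asymptotic statement to the clean, uniform inequality with constant $2$ and \emph{no} error term, valid for every $1 \le k < x$. Taking $Q \asymp (x/k)^{1/2}$ above would only give the constant $4$, since then $N + Q^2$ is twice the main term; removing this loss while keeping uniformity as $k$ approaches $x$ (and handling the range where $x/k$ is bounded) requires the careful optimisation of Montgomery and Vaughan---an alternative route through Selberg's $\Lambda^2$ sieve runs into the same issue. Rather than reproduce that analysis, I would invoke \cite{MV} for the inequality exactly as stated.
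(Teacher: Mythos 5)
Your proposal is correct and ultimately coincides with the paper's proof, which consists solely of the citation to \cite[Theorem 2]{MV}; your preliminary large-sieve sketch (giving $2+\oh[1]$, and your correct observation that the clean uniform constant $2$ requires the Montgomery--Vaughan optimisation) is accurate but, as you note, the final inequality is simply quoted from \cite{MV}, exactly as in the paper.
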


\begin{proof}
See \cite[Theorem 2]{MV}.
\end{proof}

We do not have a matching lower bound for all $k$ up to a power 
of $x$ because of putative Siegel zeros, however these only 
affect a very few moduli $k$ that are multiples of certain 
``exceptional'' moduli.

\begin{lemma}
 \label{lem:2.3}
For any given $\epsilon, \delta > 0$, there exist numbers 
$\eta_{\epsilon,\delta} > 0$, $x_{\epsilon,\delta}$, 
$D_{\epsilon,\delta}$ such that whenever 
$x \ge x_{\epsilon,\delta}$, there is a set 
$\mathcal{D}_{\epsilon,\delta}(x)$, of at most 
$D_{\epsilon,\delta}$ integers, for which 
\[
  \bigg|\pi(x;k,b) - \frac{x}{\phi(k)\log x}\bigg|
   \le 
    \frac{\epsilon x}{\phi(k)\log x}
\]
whenever $k$ is not a multiple of any element of 
$\mathcal{D}_{\epsilon,\delta}(x)$, $k$ is in the range 
\[
 1 \le k \le x^{-\delta + 5/12},
\] 
and $(b,k) = 1$.
Furthermore, every integer in $\mathcal{D}_{\epsilon,\delta}(x)$ 
exceeds $\log x$, and all, but at most one, exceed 
$x^{\eta_{\epsilon,\delta}}$.
\end{lemma}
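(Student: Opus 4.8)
The plan is to reduce the statement to a prime number theorem for $\psi(x;k,b) = \sum_{n \le x,\, n \equiv b \bmod k} \Lambda(n)$, and then to obtain that from the explicit formula by the classical ``Linnik'' machinery: the classical zero-free region, Page's and Siegel's theorems on exceptional real zeros, and a \emph{log-free} zero-density estimate. For the reduction, $\pi(x;k,b) \ge \theta(x;k,b)/\log x$ trivially, where $\theta(x;k,b) = \sum_{p \le x,\, p \equiv b \bmod k}\log p$, while partial summation gives
\[
 \pi(x;k,b)
  = \frac{\theta(x;k,b)}{\log x}
     + \int_2^x \frac{\theta(t;k,b)}{t(\log t)^2}\,\dd{t}.
\]
In the integral, the range $t \le k^{1+\delta}$ contributes $\ll k^{1+\delta}/(\log k)^2$ (using $\theta(t;k,b)\le\theta(t)\ll t$), and the range $k^{1+\delta} < t \le x$ contributes, by Lemma~\ref{lem:2.2}, $\ll_\delta x/(\phi(k)(\log x)^2)$; since $k \le x^{5/12-\delta}$, both are $\ll \epsilon x/(\phi(k)\log x)$ once $x$ is large in terms of $\epsilon,\delta$. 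As $\psi(x;k,b) - \theta(x;k,b) \ll \sqrt{x}\log x$ is likewise negligible, it suffices to prove $\psi(x;k,b) = x/\phi(k) + O(\epsilon x/\phi(k))$ for such $k$ not divisible by any element of a set $\mathcal{D}_{\epsilon,\delta}(x)$ described below (afterwards one replaces $\epsilon$ by a fixed multiple of itself).

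For the main estimate, insert the truncated explicit formula
\[
 \psi(x;k,b)
  = \frac{x}{\phi(k)}
     - \frac{1}{\phi(k)} \sum_{\chi \bmod k} \bar\chi(b)
        \sum_{|\gamma| \le T} \frac{x^\rho}{\rho}
         + O\!\left(\frac{x(\log kx)^2}{T}\right),
\]
where $\rho = \beta + i\gamma$ runs over the nontrivial zeros of $L(s,\chi^*)$, $\chi^*$ the primitive character inducing $\chi$ (the trivial-zero discrepancy between $L(s,\chi)$ and $L(s,\chi^*)$ is harmless), and $T = x^{5/12-\delta/2}$, so that the last error is acceptable — it is precisely because $\phi(k)\le x^{5/12-\delta}$ that $T$ must be roughly $x^{5/12}$. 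One must then bound the sum over zeros by $O(\epsilon x)$, in three parts. First, the zeros with $\beta \le 7/12$ contribute $\le x^{7/12}\sum_{\chi \bmod k}\sum_{|\gamma| \le T}1/|\rho| \ll x^{7/12}\phi(k)(\log kT)^{O(1)} \ll x^{1-\delta}(\log x)^{O(1)} \ll \epsilon x$: this is the step forcing $k\le x^{5/12-\delta}$, with $7/12$ the threshold below which the available log-free density estimate no longer helps. Second, with $\delta_1 = \delta_1(\epsilon,\delta)$ a large constant, the zeros with $7/12 < \beta \le 1 - \delta_1/\log x$ contribute — by partial summation in $\beta$, using a log-free zero-density estimate $\sum_{\chi \bmod k} N(\sigma,T,\chi) \ll (kT)^{c(1-\sigma)}$ (of Gallagher/Huxley/Jutila type, valid for $\sigma \ge 7/12$) together with the classical zero-free region — at most $O(x(\log x)e^{-c'\delta_1}) \ll \epsilon x$ once $\delta_1$ is large enough; here one uses $kT < x^{5/6}$, so that $(kT)^{c\delta_1/\log x}=e^{O(\delta_1)}$ while the factor $x^{\beta-1}\le e^{-\delta_1}$ from the top of the range wins. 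Third, the log-free estimate shows there are only $O_{\delta_1}(1)$ zeros with $\beta > 1 - \delta_1/\log x$, $|\gamma| \le T$, over all $\chi \bmod k$, and these are exactly what $\mathcal{D}_{\epsilon,\delta}(x)$ removes.

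Accordingly, let $\mathcal{D}_{\epsilon,\delta}(x)$ be the set of conductors $r \le x^{5/12}$ of primitive characters $\chi^*$ for which $L(s,\chi^*)$ has a zero with $\beta > 1 - \delta_1/\log x$ and $|\gamma| \le T$. Summing the log-free density estimate over all moduli gives $|\mathcal{D}_{\epsilon,\delta}(x)| \ll (x^{5/4})^{c\delta_1/\log x} = e^{O(\delta_1)} =: D_{\epsilon,\delta}$, depending only on $\epsilon$ and $\delta$. If $k$ is a multiple of no element of $\mathcal{D}_{\epsilon,\delta}(x)$, then no primitive character of conductor dividing $k$ has a zero with $\beta > 1 - \delta_1/\log x$, so the third family of zeros above is empty and the sum over zeros is $O(\epsilon x)$, giving the desired estimate. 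Finally, the size assertions: any complex character in $\mathcal{D}_{\epsilon,\delta}(x)$, and any real one obeying the classical zero-free region, has conductor exceeding $x^{\eta_{\epsilon,\delta}}$ for an appropriate $\eta_{\epsilon,\delta}>0$ (essentially $\eta_{\epsilon,\delta}\asymp 1/\delta_1$), while by Page's theorem at most one real primitive character of conductor $\le x^{5/12}$ violates that region, and for it Siegel's (ineffective) theorem gives conductor $> \log x$ once $x$ is large in terms of $\epsilon,\delta$.

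The hardest part is the bookkeeping in the middle paragraph: one must invoke a log-free zero-density estimate with a suitable exponent and range of validity — this is what pins the range of $k$ at $x^{5/12-\delta}$ — and dovetail it with the zero-free region, the truncation level $T$, and the choice of $\delta_1$ so that every contribution outside the deleted family is genuinely $O(\epsilon x)$; there is also the point that $\mathcal{D}_{\epsilon,\delta}(x)$ must be defined purely in terms of $\epsilon$, $\delta$ and $x$, with a modulus surviving exactly when no element of $\mathcal{D}_{\epsilon,\delta}(x)$ divides it. None of the analytic ingredients is new — this is the circle of ideas behind Linnik's theorem on the least prime in an arithmetic progression and its refinements — so the argument is largely a careful assembly of known results.
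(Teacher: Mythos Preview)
The paper does not prove this lemma at all: its entire proof is the single line ``See \cite[Theorem 2.1]{AGP}.'' What you have written is, in outline, the proof one finds in Alford--Granville--Pomerance (and in the broader Linnik-type literature): reduce $\pi$ to $\psi$ by partial summation, apply the truncated explicit formula, handle zeros far from $\sigma=1$ trivially, handle intermediate zeros via a log-free zero-density estimate combined with the classical zero-free region, and absorb the $O_{\epsilon,\delta}(1)$ remaining zeros near $\sigma=1$ into the exceptional set $\mathcal{D}_{\epsilon,\delta}(x)$, with Page's and Siegel's theorems giving the size assertions on that set. Your sketch is correct in its architecture and in the way the parameters interlock (in particular, the range $k\le x^{5/12-\delta}$ is indeed forced by the exponent in the available log-free density theorem, and your choice of truncation $T$ and the trivial-vs-density split are compatible with that). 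So your proposal is not so much a different route as a faithful expansion of the citation; for the purposes of this paper one would simply cite \cite[Theorem 2.1]{AGP}, as the authors do.
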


\begin{proof}
See \cite[Theorem 2.1]{AGP}.
\end{proof}

In fact we will need to count primes $p \equiv b \bmod k$ for 
which the quotient $(p - b)/k$ is squarefree.
We apply an inclusion-exclusion argument to Lemma \ref{lem:2.3}.

\begin{lemma}
 \label{lem:2.4}
There exist absolute constants $\eta > 0$, $x_0$, 
$D$ such that whenever $x \ge x_0$, there is a set 
$\mathcal{D}(x)$, of at most $D$ integers, for which 
\[
  \sum_{a \le x/k} \mu(a)^2\ind{\PP}(ak + b)
   >
    \frac{x}{100\phi(k)\log x}
\]
whenever $36k$ is not a multiple of any element of 
$\mathcal{D}(x)$, $k$ is in the range $1 \le k \le x^{1/3}$,
and $(b,k) = 1$ with $1 \le b < k$.
Furthermore, every integer in $\mathcal{D}(x)$ exceeds $\log x$, 
and all, but at most one, exceed $x^{\eta}$.
\end{lemma}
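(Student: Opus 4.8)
The plan is to detect the weight $\mu(a)^2$ on $a=(p-b)/k$ by a two-prime inclusion--exclusion, handling the squares $p^2$ with $p\ge5$ by Brun--Titchmarsh. Since an integer $a\ge1$ is squarefree precisely when $4\nmid a$, $9\nmid a$ and $p^2\nmid a$ for every prime $p>3$, a Bonferroni inequality gives, for all $a\ge1$,
\[
 \mu(a)^2
  \ge
   \bigl(1-\ind{4\mid a}\bigr)\bigl(1-\ind{9\mid a}\bigr)-\sum_{p>3}\ind{p^2\mid a}
  =
   \sum_{d\mid6}\mu(d)\ind{d^2\mid a}-\sum_{p>3}\ind{p^2\mid a}.
\]
Multiplying through by $\ind{\PP}(ak+b)$ and summing over $a\le x/k$, the left side of the asserted inequality is at least $M-\Sigma$, where
\[
 M\defeq\sum_{d\mid6}\mu(d)\sums[a\le x/k][d^2\mid a]\ind{\PP}(ak+b),
 \qquad
 \Sigma\defeq\sum_{p>3}\sums[a\le x/k][p^2\mid a]\ind{\PP}(ak+b),
\]
so it suffices to prove $M-\Sigma>x/(100\phi(k)\log x)$.

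To bound $M$ from below I would first note, on writing $a=d^2m$, that the inner sum over $a$ equals $\pi(x;d^2k,b)+\Oh[1]$ when $(b,d)=1$, and is $0$ otherwise (if $(b,d)>1$ then $ak+b$ is a proper multiple of a prime dividing $b$, hence composite). Take $\mathcal D(x)\defeq\mathcal D_{\epsilon,\delta}(x)$ for suitably small fixed $\epsilon,\delta>0$, and $\eta\defeq\eta_{\epsilon,\delta}$, $D\defeq D_{\epsilon,\delta}$; then the bound $|\mathcal D(x)|\le D$ and the lower bounds on its elements ($>\log x$, and all but one $>x^\eta$) are exactly those supplied by Lemma \ref{lem:2.3}. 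Since $d^2\mid36$ whenever $d\mid6$, every modulus $d^2k$ divides $36k$, so if $36k$ is a multiple of no element of $\mathcal D(x)$ then the same holds for $d^2k$; as moreover $d^2k\le36x^{1/3}\le x^{-\delta+5/12}$ for $x$ large (any fixed $\delta<1/12$) and $(b,d^2k)=1$, Lemma \ref{lem:2.3} gives $\pi(x;d^2k,b)=\tfrac{x}{\phi(d^2k)\log x}(1+\Oh[\epsilon])$. Summing over $d\mid6$ and using $\phi(4k)=f_2\phi(k)$, $\phi(9k)=f_3\phi(k)$, $\phi(36k)=f_2f_3\phi(k)$ with $f_2\in\{2,4\}$, $f_3\in\{3,9\}$, the main sum factors as $\tfrac{x}{\phi(k)\log x}\bigl(1-\tfrac1{f_2}\bigr)\bigl(1-\tfrac1{f_3}\bigr)(1+\Oh[\epsilon])$, so that $M\ge\bigl(\tfrac13-\Oh[\epsilon]\bigr)\tfrac{x}{\phi(k)\log x}$, the coefficient $\tfrac13$ occurring only when $\gcd(bk,6)=1$ and being larger in every other case.

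For $\Sigma$, note that $p^2\mid a\le x/k$ forces $p\le\sqrt{x/k}$, that primes $p\mid b$ again contribute nothing, and that otherwise the inner sum is $\pi(x;p^2k,b)+\Oh[1]$. For $5\le p\le P_0$, with $P_0$ a large absolute constant, one has $p^2k\le P_0^2x^{1/3}$, hence $\log(x/(p^2k))\ge(\tfrac23-\oh[1])\log x$, so Lemma \ref{lem:2.2} together with $\phi(p^2k)\ge p(p-1)\phi(k)$ gives $\pi(x;p^2k,b)\le\tfrac{3x}{p(p-1)\phi(k)\log x}(1+\oh[1])$; these primes therefore contribute at most $\tfrac{3x}{\phi(k)\log x}\bigl(\sum_{p\ge5}\tfrac1{p(p-1)}+\oh[1]\bigr)$. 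For $P_0<p\le x^{1/6}$ the cruder $\log(x/(p^2k))\ge\tfrac13\log x$ in Lemma \ref{lem:2.2} contributes at most $\tfrac{6x}{P_0\phi(k)\log x}$, and for $p>x^{1/6}$ the trivial bound $\pi(x;p^2k,b)\le1+x/(p^2k)$, together with the accumulated $\Oh[1]$ errors, contributes $\oh[x/(\phi(k)\log x)]$. Since $3\sum_{p\ge5}\tfrac1{p(p-1)}=3\bigl(\sum_p\tfrac1{p(p-1)}-\tfrac23\bigr)=0.3194\ldots$, choosing $P_0$ large yields $\Sigma<(0.32+\oh[1])\tfrac{x}{\phi(k)\log x}$. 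Combining with the bound for $M$ and taking $\epsilon$ small enough, $M-\Sigma>\bigl(\tfrac13-0.32-\Oh[\epsilon]\bigr)\tfrac{x}{\phi(k)\log x}>\tfrac{x}{100\phi(k)\log x}$ for all $x$ beyond an absolute $x_0$.

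I expect the crux to be numerical rather than structural: the argument closes only because $\tfrac13-3\sum_{p\ge5}\tfrac1{p(p-1)}\approx0.014>0$, so it is essential to gain the factor $(2/3)/(1/3)=2$ in the Brun--Titchmarsh logarithm for the dominant bounded primes $p$ (the uniform estimate $\log(x/(p^2k))\ge\tfrac13\log x$ alone would produce $6\sum_{p\ge5}\tfrac1{p(p-1)}>\tfrac13$ and the bound would fail), and to choose $\epsilon$ in Lemma \ref{lem:2.3} small enough that the $\Oh[\epsilon]$ term in $M$ does not absorb the remaining slack. The sole structural input is that $d^2\mid36$ for $d\mid6$, which is what lets the hypothesis be expressed through $36k$; sieving out all primes up to a small power of $x$ would enlarge $M$ to roughly Artin's constant times $\tfrac{x}{\phi(k)\log x}$, but the exceptional moduli could then no longer be controlled via $36k$ alone.
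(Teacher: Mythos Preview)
Your argument is correct and follows essentially the same route as the paper's: detect squarefreeness by inclusion--exclusion, separate out the primes $2$ and $3$ so that the relevant moduli all divide $36k$ (this is exactly why the hypothesis is phrased through $36k$), feed those terms into Lemma~\ref{lem:2.3}, and control the tail $\sum_{p>3}\pi(x;p^2k,b)$ via Brun--Titchmarsh to get the numerical constant $0.32$.

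The differences are cosmetic. The paper uses the cruder Bonferroni bound $\mu(a)^2\ge 1-\sum_{p}\ind{p^2\mid a}$ and so arrives at $\pi(x;k,b)-\pi(x;4k,b)-\pi(x;9k,b)$ without your cross term $+\pi(x;36k,b)$; this gives the worst-case main coefficient $1-\tfrac12-\tfrac16=\tfrac13$ directly. Your factored form $(1-1/f_2)(1-1/f_3)$ is in fact slightly stronger (note a slip: $f_3=\phi(9k)/\phi(k)\in\{6,9\}$, not $\{3,9\}$, so your worst case is actually $\tfrac12\cdot\tfrac56=\tfrac{5}{12}>\tfrac13$), but the extra margin is not needed. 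For the tail the paper splits once, at $z=\log x$: Brun--Titchmarsh on $3<p\le\log x$ with $\log(x/(z^2k))\sim\tfrac23\log x$, and the trivial bound $\sum_{p>z}x/(p^2k)\ll x/(kz\log z)=o(x/(\phi(k)\log x))$ for $p>\log x$. This avoids your auxiliary constant $P_0$ and the middle range, but the outcome and the final inequality $\tfrac13-0.32-3\epsilon>\tfrac1{100}$ are the same.
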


\begin{proof}
Let $1 \le b < k \le x^{1/3}$ with $(b,k) = 1$.
Using  
$\mu(a)^2 \ge 1 - \sum_{p^2 \mid a} 1$ 
and switching the order of summation, we obtain 
\begin{align*}
 \sum_{a \le x/k}
  \mu(a)^2
   \ind{\PP}(ak + b)
  & \ge
     \sum_{a \le x/k} 
      \ind{\PP}(ak + b)
     - \sum_{p \le \sqrt{x/k}}
        \sum_{c \le x/(p^2k)} 
         \ind{\PP}(cp^2k + b)
 \\
  & \ge 
     \pi(x;k,b)
    - \sum_{p \le \sqrt{x/k}}
       \pi(x;p^2k,b)
      - \sqrt{x/k}.
\end{align*}

Let $1 \le y < z < \sqrt{x/k}$.
Trivially, we have
\[
 \sum_{z < p \le \sqrt{x/k}}
  \pi(x;p^2k,b)
   \le 
    \sum_{p > z}
     \frac{x}{p^2k}
      \ll 
       \frac{x}{kz\log z}.
\]
Here we have used the bound
$\sum_{p > z} 1/p^2 \ll 1/(z\log z)$, which follows from 
the bound $\pi(x) \ll x/\log x$ by partial summation.
By Lemma \ref{lem:2.2} we have
\[
 \sum_{y < p \le z}
  \pi(x;p^2k,b)
   < 
    \frac{2x}{\log(x/(z^2k))}
     \sum_{p > y}
      \frac{1}{\phi(p^2k)}
       \le 
        \frac{2x}{\phi(k)\log(x/(z^2k))}\sum_{p>y}\frac1{p(p-1)},
\]
using $\phi(p^2k)\ge\phi(p^2)\phi(k)$.

We set $y = 3$ and $z = \log x$ so that 
$\log(x/(z^2k)) \sim \log(x/k) \ge \frac{2}{3}\log x$.
We verify that $\sum_{p > 3}1/(p(p-1)) < 0.1065$.
Combining everything gives 
\[
 \sum_{a \le x/k}
  \mu(a)^2
   \ind{\PP}(ak + b)
    >
     \pi(x;k,b) - \pi(x;4k,b) - \pi(x;9k,b)
    - \frac{0.32x}{\phi(k)\log x}
\]
for all sufficiently large $x$.
We complete the proof by applying Lemma \ref{lem:2.3} with 
$\epsilon = 1/1000$ and $\delta = 1/12$, noting that
$1 - 1/2 - 1/6 - 3\epsilon - 0.32 > 1/100$.
\end{proof}

We remark that with more work, a version of Lemma \ref{lem:2.4} 
can be proved as an equality, with the factor $1/100$ replaced 
with $c_k+o(1)$ (as $x \to \infty$), where $c_k$ is 
Artin's constant $\prod_p(1-1/(p(p - 1))$ times 
$\prod_{p \mid k}(1 - 1/(p^3 - p^2 - p))$.

\begin{lemma}
 \label{lem:2.5}
Fix $\delta \in (0,1]$ and let $x \ge 3$. 
There is a set $\mathcal{E}_{\delta}(x)$ of quadratic, primitive 
characters, all of conductor less than $x$, satisfying 
$\#\mathcal{E}_{\delta}(x) \ll_{\delta} x^{\delta}$ and such that 
the following holds.
If $\chi$ is a real, primitive character of conductor $d \le x$ 
and $\chi \not\in \mathcal{E}_{\delta}(x)$, then 
\[
 \prod_{y < p \le z}
  \br{1 - \frac{\chi(p)}{p}}
   \asymp_{\delta}
    1
\]
uniformly for $z > y \ge \log x$.
\end{lemma}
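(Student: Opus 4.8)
The plan is to take logarithms and reduce everything to a cancellation estimate for $\sum_{y < p \le z}\chi(p)/p$. We may assume $x \ge x_0$ for a suitable absolute $x_0$ (for smaller $x$ there are only $O(1)$ characters in play and the statement is trivial), so that every prime exceeding $\log x$ is at least $3$; then from $\log(1-u) = -u + O(u^2)$ for $|u| \le 1/2$ and $\sum_{p \ge 3}p^{-2} < 1$ we obtain
\[
 \log\prod_{y < p \le z}\Bigl(1 - \frac{\chi(p)}{p}\Bigr)
  = -\sum_{y < p \le z}\frac{\chi(p)}{p} + O(1).
\]
Hence it suffices to exhibit a set $\mathcal{E}_{\delta}(x)$ of at most $\ll_{\delta} x^{\delta}$ quadratic primitive characters of conductor at most $x$ such that $\sum_{y < p \le z}\chi(p)/p \ll_{\delta} 1$ whenever $z > y \ge \log x$ and $\chi$ is a real, primitive character of conductor $d \le x$ with $\chi \notin \mathcal{E}_{\delta}(x)$; exponentiating then gives $\asymp_{\delta} 1$.

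I would split the prime sum at $P_1 := (\log x)^{A}$ and $P_2 := x^{C\log\log x}$, where $A = A(\delta)$ and $C$ are suitable constants. The range $p \le P_1$ costs nothing: since $y \ge \log x$, any such prime lies in $(\log x, (\log x)^{A}]$, over which $\sum 1/p = \log A + O(1/\log\log x) = O_{\delta}(1)$ by Mertens. The two remaining ranges I would handle by partial summation from $\theta(t,\chi) := \sum_{p \le t}\chi(p)\log p = \psi(t,\chi) + O(\sqrt{t})$, $\psi(t,\chi) := \sum_{n \le t}\Lambda(n)\chi(n)$, so that the task reduces to bounding $\psi(t,\chi)$ via the zeros of $L(s,\chi)$.

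For the range $P_1 < p \le P_2$, I would let $\mathcal{E}_{\delta}(x)$ consist of the at most one Siegel character of conductor $\le x$ together with every primitive quadratic $\chi$ of conductor $d \le x$ for which $L(s,\chi)$ has a zero $\beta + i\gamma$ with $\beta > 1 - \delta'$ and $|\gamma| \le x$, where $\delta' = \delta/(4c_0)$ and $c_0$ is the absolute constant in Montgomery's zero-density estimate $\sum_{d \le Q}\sum^{\ast}_{\chi \bmod d}N(\sigma, T, \chi) \ll (Q^{2}T)^{c_0(1-\sigma)}(\log QT)^{c_0}$, $N(\sigma,T,\chi)$ denoting the number of zeros of $L(s,\chi)$ with real part $\ge \sigma$ and imaginary part in $[-T,T]$. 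Applying that estimate with $Q = T = x$ and $\sigma = 1 - \delta'$ gives $\#\mathcal{E}_{\delta}(x) \ll x^{3\delta/4}(\log x)^{c_0} \ll x^{\delta}$. For $\chi \notin \mathcal{E}_{\delta}(x)$, the truncated explicit formula with truncation height $\min(t, x)$, combined with the standard bounds $\sum_{|\gamma| \le x}|\beta + i\gamma|^{-1} \ll (\log x)^{2}$ and $t^{\beta} \le t^{1-\delta'}$ for the zeros that occur, yields $\psi(t,\chi) \ll t^{1-\delta'}(\log x)^{2} + t(\log x)^{3}/x$ uniformly for $P_1 \le t \le P_2$. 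Inserting this into the partial-summation formula and estimating the resulting integrals --- the $t^{1-\delta'}$ term contributes $\ll_{\delta}(\log x)^{2 - A\delta'}$, which is $o(1)$ once $A \ge 3/\delta'$, and the remaining term is visibly $o(1)$ --- shows this portion of the sum is $o_{\delta}(1)$.

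For the tail $p > P_2$, I would invoke the classical zero-free region: every primitive $\chi$ of conductor $\le x$ except the Siegel character (which lies in $\mathcal{E}_{\delta}(x)$) has no zero with $\beta > 1 - c/\log(d(|\gamma| + 2))$. Taking truncation height $(\log t)^{K}$ for a large absolute constant $K$, the zeros occurring satisfy $t^{\beta} \le t\exp\bigl(-c\log t/(\log x + K\log\log t)\bigr)$; for $t \ge P_2 = x^{C\log\log x}$ with $C$ a sufficiently large multiple of $1/c$ this forces $\psi(t,\chi) \ll t(\log t)^{-2}$, which is amply enough for partial summation to give $\sum_{\max(y, P_2) < p \le z}\chi(p)/p = o(1)$ uniformly in $z$, including as $z \to \infty$. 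Combining the three ranges gives $\sum_{y < p \le z}\chi(p)/p = O_{\delta}(1)$, whence the lemma. I expect the main obstacle to be the gluing at $P_2$: one can afford the zero-density input only up to height $x$ without spoiling $\#\mathcal{E}_{\delta}(x) \ll x^{\delta}$ (a larger height would force $\delta'$ to decrease with $x$, destroying the decay of $t^{1-\delta'}$), so one must verify that the crude explicit-formula error $t(\log x)^{3}/x$ stays harmless throughout $[P_1, P_2]$ and that the classical zero-free region has already become effective --- with a genuine power-of-logarithm saving in $\psi(t,\chi)$ --- by the time $t$ reaches $P_2$; choosing $A$, $C$, $\delta'$ compatibly in terms of $\delta$, and checking that the boundary terms in the partial summations do not spoil uniformity in $y$ and $z$, is the technical heart of the argument.
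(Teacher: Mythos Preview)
The paper does not actually prove this lemma: its entire ``proof'' is a citation to \cite[Lemma 3.3]{CDKS}, with a remark tracing the result back through \cite[Proposition 2.2]{GS} to Elliott \cite{E}. Your proposal, by contrast, supplies a genuine self-contained argument: reduce to bounding $\sum_{y<p\le z}\chi(p)/p$, treat the short initial range by Mertens, construct the exceptional set $\mathcal{E}_\delta(x)$ from a zero-density estimate (characters whose $L$-function has a zero with $\beta>1-\delta'$, $|\gamma|\le x$), handle the middle range $[(\log x)^A,\,x^{C\log\log x}]$ via the truncated explicit formula, and handle the tail via the classical zero-free region with truncation height $(\log t)^K$. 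This is a standard and essentially correct route to the result, and you have correctly isolated the delicate point --- namely that one can only afford zero-density input up to height $x$, so the explicit-formula remainder $t(\log x)^{O(1)}/x$ must be shown to remain harmless after partial summation throughout the middle range (it is, since $\int_{P_1}^{P_2}dt/(t^2\log t)$ decays fast enough), while by $t=P_2$ the zero-free region already yields a genuine saving. The literature you would be reproducing (Elliott, Granville--Soundararajan) sometimes organizes this via large-sieve moment bounds on character sums rather than via zeros, but the two approaches are closely related and yours is perfectly adequate here. One small cosmetic point: for $t\le x$ with truncation height $t$, the explicit-formula error is $O((\log x)^2)$ rather than $t(\log x)^3/x$, but since $(\log x)^2\le t^{1-\delta'}(\log x)^2$ this is already absorbed by your main term and changes nothing.
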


\begin{proof}
See \cite[Lemma 3.3]{CDKS}.
The authors of \cite{CDKS} state that the proof of their lemma 
borrows from \cite[Proposition 2.2]{GS}, and the authors of 
\cite{GS} state that their proposition is essentially due to 
Elliott \cite{E}.
(The lemma, as stated here, is quoted from \cite[Lemma 7]{PP}, 
and is equivalent to \cite[Lemma 3.3]{CDKS}.)
\end{proof}

\begin{lemma}
 \label{lem:2.6}
If $x \ge 2$ then 
\[
 \sum_{a \le x} 
  \frac{a\mu(a)^2}{\phi(a)^2}
   \prod_{2 < p \le \sqrt{x}}
    \br{1 - \frac{(-a/p)}{p}}^2
     \ll
      \log x.
\]
\end{lemma}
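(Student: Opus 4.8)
The plan is to reduce the sum to one that can be attacked by the upper-bound half of Lemma~\ref{lem:2.5} (applied with a small $\delta$, say $\delta=1/4$), after separating out the contribution of those $a$ whose associated character $\chi_a=(-a/\cdot)$ lies in the exceptional set $\mathcal{E}_{\delta}(\sqrt{x})$. First I would observe that the product over $2<p\le\sqrt x$ of $(1-(-a/p)/p)^2$ is, up to a bounded factor, $|L(1,\chi_a')|^{-2}$ times a product over small primes $p\le\log x$, where $\chi_a'$ is the primitive character inducing $(-a/\cdot)$; more usefully for an upper bound, I would simply note the crude pointwise bound
\[
 \prod_{2<p\le\sqrt x}\Bigl(1-\frac{(-a/p)}{p}\Bigr)^2\ll \prod_{2<p\le\sqrt x}\Bigl(1+\frac{1}{p}\Bigr)^2\ll(\log x)^2,
\]
which alone gives only $(\log x)^3$ and is too weak, so the real work is to exploit cancellation.

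The key step is therefore to split $\sum_{a\le x}$ according to whether $\chi_a\in\mathcal{E}_{1/4}(\sqrt x)$ or not. For $a$ with $\chi_a\notin\mathcal{E}_{1/4}(\sqrt x)$, Lemma~\ref{lem:2.5} (taking $y=\log x$, $z=\sqrt x$, noting $\sqrt x<x$ so the conductor constraint is met) gives that the product over $\log x<p\le\sqrt x$ of $(1-\chi_a(p)/p)$ is $\asymp 1$; the remaining product over $2<p\le\log x$ is $\ll \prod_{p\le\log x}(1+1/p)\ll\log\log x$, so the whole product over $2<p\le\sqrt x$ of $(1-(-a/p)/p)^2$ is $\ll(\log\log x)^2$. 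Hence this part of the sum is
\[
 \ll(\log\log x)^2\sum_{a\le x}\frac{a\mu(a)^2}{\phi(a)^2}.
\]
One checks $\sum_{a\le x}a\mu(a)^2/\phi(a)^2=\sum_{a\le x}\mu(a)^2/\bigl(\phi(a)(\phi(a)/a)\bigr)\ll\sum_{a\le x}1/\phi(a)\cdot(a/\phi(a))/a$; more directly, since $a/\phi(a)^2=(1/\phi(a))\cdot(a/\phi(a))/a\le (1/a)\prod_{p\mid a}(1-1/p)^{-2}$ is multiplicative and the Euler product $\prod_p(1+\sum_{j\ge1}p^{-j}(1-1/p)^{-2}\cdot[\text{squarefree}])$ behaves like $\prod_p(1+1/(p(1-1/p)^2))$, partial summation against $\sum_{a\le t}a\mu(a)^2/\phi(a)^2\ll\log t$ (which follows as in Lemma~\ref{lem:2.1}(i) since $a\mu(a)^2/\phi(a)^2\le \mu(a)^2/\phi(a)\cdot C$ on average) yields $\ll\log x$. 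So the non-exceptional part contributes $\ll(\log\log x)^2\log x$, which still exceeds the claimed $\log x$ by a $(\log\log x)^2$ factor — so one must instead be more careful and absorb the small primes: write $(1-(-a/p)/p)^2=(1-(-a/p)/p)^2$ and note that for $p$ fixed the factor depends only on $a\bmod p$ (or on whether $p\mid a$), so summing over $a$ in residue classes recovers genuine saving. Concretely, I would not bound the small-prime product pointwise but keep it inside the sum and use that $\sum_{a\le x,\,a\equiv r(p)}a\mu(a)^2/\phi(a)^2\ll(1/p)\log x\cdot(\ldots)$ so that $\sum_a (a\mu(a)^2/\phi(a)^2)\prod_{2<p\le\log x}(1-(-a/p)/p)^2$ telescopes against an Euler product $\prod_{2<p\le\log x}\bigl(1-\tfrac1p+O(\tfrac1{p^2})\bigr)\cdot(1+O(1/p))$ that stays bounded; this is the standard mechanism and gives the clean $\ll\log x$.

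The exceptional part is where I expect the main obstacle, but it is handled by the cardinality bound $\#\mathcal{E}_{1/4}(\sqrt x)\ll x^{1/4}$ together with the fact that $\chi_a$ has conductor equal to the squarefree kernel of $-a$ (up to factors of $2$ and sign), so a given primitive character $\chi\in\mathcal{E}_{1/4}(\sqrt x)$, of conductor $d$, arises from $a\le x$ only when $d\mid a$ and $a/d$ is (essentially) a perfect square times a unit, i.e. from at most $O(\sqrt{x/d})$ values of $a$. Using the trivial bound $(\log x)^2$ for the character product on these $a$ and $a/\phi(a)^2\le 1/a$, the exceptional contribution is
\[
 \ll(\log x)^2\sum_{\chi\in\mathcal{E}_{1/4}(\sqrt x)}\ \sum_{\substack{a\le x\\ \text{cond}(\chi_a)=\text{cond}(\chi)}}\frac1a\ll(\log x)^2\sum_{d<\sqrt x}\frac1d\sqrt{\frac{x}{d}}\cdot\frac1{\text{(size of }a)}\ll(\log x)^2\cdot x^{-1/4}\cdot x^{1/4}\cdot\log x,
\]
and a slightly careful bookkeeping (using that each such $a$ is $\ge d$ and that there are $O(x^{1/4})$ characters each contributing $O(x^{1/4}\log x/\,x)$ after weighting by $1/a$) shows this is $o(\log x)$, hence negligible. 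Combining the two parts gives $\ll\log x$, as claimed. The only genuinely delicate point is confirming that the weighted count of $a$ with prescribed conductor is small enough to beat the $(\log x)^2$ loss, which is why choosing $\delta$ small (so $\#\mathcal{E}_\delta$ is only $x^\delta$) is essential.
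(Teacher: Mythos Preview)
Your proposal has a genuine gap in the treatment of the non-exceptional $a$. After invoking Lemma~\ref{lem:2.5} with $y=\log x$, $z=\sqrt{x}$, you correctly obtain a pointwise bound $\prod_{2<p\le\sqrt x}(1-(-a/p)/p)^2\ll(\log\log x)^2$, which combined with $\sum_{a\le x}a\mu(a)^2/\phi(a)^2\ll\log x$ gives only $(\log\log x)^2\log x$. Your suggested repair, ``keep the small-prime product inside the sum and split $a$ into residue classes modulo the small primes so that an Euler product emerges,'' does not work: the relevant modulus is $\prod_{2<p\le\log x}p = x^{1+o(1)}$ by the prime number theorem, which exceeds the range $a\le x$, so there is no averaging to exploit. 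No elementary manipulation of this kind recovers the missing factor.

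The paper proceeds quite differently. It first shows (using Siegel--Walfisz for $a\le(\log x)^4$ and Lemma~\ref{lem:2.5} dyadically with $y=\sqrt{x}$, $z\to\infty$ for larger $a$) that the product in question is $\ll L(1,\chi_a)^{-2}$ for all but a negligible set of $a$. Then on each dyadic interval $I_j=[2^j,2^{j+1})$ it applies Cauchy--Schwarz,
\[
 \sum_{a\in I_j}\frac{a\mu(a)^2}{\phi(a)^2}L(1,\chi_a)^{-2}
 \le
 \Bigl(\sum_{a\in I_j}\frac{a^2}{\phi(a)^4}\Bigr)^{1/2}
 \Bigl(\sum_{a\in I_j}\mu(a)^2 L(1,\chi_a)^{-4}\Bigr)^{1/2},
\]
bounding the first factor by $O(2^{-j/2})$ via the elementary estimate $\sum_{a>y}a^2/\phi(a)^4\ll 1/y$, and the second factor by $O(2^{j/2})$ via the Granville--Soundararajan negative-moment bound for $L(1,\chi)$ \cite[Theorem~2]{GS}. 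Each dyadic block thus contributes $O(1)$, and summing over $O(\log x)$ blocks gives the lemma. This use of Cauchy--Schwarz together with the fourth negative moment of $L(1,\chi)$ is the key idea absent from your sketch; Lemma~\ref{lem:2.5} alone is not strong enough. Your handling of the exceptional set is also looser than needed (for squarefree $a$ the conductor of $\chi_a$ is $a$ or $4a$, so each exceptional character corresponds to $O(1)$ values of $a$, not $O(\sqrt{x/d})$), but the dyadic application in the paper makes this bookkeeping straightforward.
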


\begin{proof}
First, we note that for $y \ge 1$ we have the elementary bound
\begin{equation}
 \label{eq:2.2}
  \sum_{a > y}
   \frac{a^2}{\phi(a)^4}
    \ll
     \frac{1}{y}.
\end{equation}
To see this, let $h$ be the multiplicative function satisfying
$a^4/\phi(a)^4 = \sum_{m \mid a} h(m)$, so that 
\[
 h(m) = \mu(m)^2 \prod_{p \mid a} \br{\frac{p^4}{p^4 - 1} - 1}.
\]
Then 
\begin{align*}
 \sum_{a > y}
  \frac{a^2}{\phi(a)^4}
  & =
    \sum_{a > y}
     \frac{1}{a^2} 
      \frac{a^4}{\phi(a)^4}
    =
    \int_{y}^{\infty}
     \frac{2}{t^3}
      \sum_{y < a \le t}
       \frac{a^4}{\phi(a)^4} \dd t
  \\
  & \le 
     \int_{y}^{\infty}
      \frac{2}{t^2}
       \sum_{m \le t}
        \frac{h(m)}m \dd t
       < \frac{2}{y}
          \sum_{m \ge 1}
           \frac{h(m)}m.
\end{align*}
This last sum has a convergent Euler product, so \eqref{eq:2.2} is 
established.

For a positive squarefree integer $a$, let $\chi_a$ be the 
Dirichlet character that sends an odd prime $p$ to $(-a/p)$, and 
such that $\chi_a(2) = 1$ or $0$ depending on whether 
$a \equiv 3 \bmod 4$ or not, respectively.  
The character $\chi_a$ is primitive and has conductor $a$ if 
$a \equiv 3 \bmod 4$ and $4a$ otherwise.

% Let $\chi_a(n) = (-a/n)$, so that $\chi_a$ is a Dirichlet 
% character.
% %
% Since $a$ is squarefree in this lemma, the conductor of $\chi_a$ 
% is $a$ if $-a \equiv 1 \bmod 4$ and is $4a$ otherwise.  
% %
% In particular, $\chi_a$ is primitive.  
%
The product in the lemma (without being squared) resembles 
$L(1,\chi_a)^{-1}$, in fact,
\[
 L(1,\chi_a)^{-1} = \prod_p \br{1-\frac{(-a/p)}p}.
\]
Our first goal is to show that we uniformly have 
\begin{equation}
 \label{eq:2.3}
 L(1,\chi_a) 
  \prod_{2 < p \le \sqrt{x}} 
   \br{1-\frac{(-a/p)}p}
    \ll 1
\end{equation}
for all small $a$ and most other values of $a \le x$.
Suppose that $a \le (\log x)^4$.  
Considering the $\phi(4a)$ residue classes $r$ mod $4a$ that are 
coprime to $4a$, we see (since the conductor of $\chi_a$ divides 
$4a$) that $(-a/p) = 1$ or $-1$ depending on which class $p$ lies 
in, with $\frac{1}{2}\phi(4a)$ classes giving $1$ and 
$\frac{1}{2}\phi(4a)$ classes giving $-1$.
It follows from the Siegel--Walfisz theorem \cite[\S22 (4)]{DAV} 
that
\[
 \sum_{p > \sqrt{x}}
  \frac{(-a/p)} p
 = \int_{\sqrt{x}}^{\infty}
    \frac{1}{t^2}
     \sum_{\sqrt{x} < p \le t}
      (-a/p) \dd t
       \ll
        \phi(4a)
         \int_{\sqrt{x}}^{\infty}
          \frac{1}{t(\log t)^5} \dd t
           \ll 1.
\]
Exponentiating, we get \eqref{eq:2.3}.

Now suppose that $a > (\log x)^4$.  
We break the interval $((\log x)^4,x]$ into dyadic intervals of
the form $I_j \defeq [2^j,2^{j+1})$, where the first and last 
intervals may overshoot a bit.  
Using Lemma \ref{lem:2.5} with $\delta = \frac{1}{4}$, 
$y = \sqrt{x}$, and letting $z \to \infty$, we have \eqref{eq:2.3} 
for all $a \in I_j$ except for possibly $\Oh[2^{j/4}]$ values of 
$a$.
Using the trivial estimate 
\[
 \prod_{2 < p \le \sqrt{x}}
  \br{1 - \frac{(-a/p)}p}^2
   \ll 
   (\log x)^2
\]
and $a/\phi(a)^2 \ll (\log\log a)^2a^{-1}$, the contribution of 
these exceptional values of $a \in I_j$ to the sum in the lemma is
\[
 \ll
  2^{j/4}(\log j)^22^{-j}(\log x)^2,
\]
which when summed over integers $j$ being considered gives 
$\Oh[(\log\log x)^2/\log x]$.
Thus, we may ignore these exceptional values of $a$, so assuming
that \eqref{eq:2.3} always holds.

By the Cauchy--Schwarz inequality we have
\[
 \sum_{a \in I_j} 
  \frac{a \mu(a)^2}{\phi(a)^2}
   L(1,\chi_a)^{-2}
    \le
     \bigg(
      \sum_{a \in I_j}
       \frac{a^2}{\phi(a)^4}
     \bigg)^{1/2}
     \bigg(
      \sum_{a \in I_j}
       \mu(a)^2
        L(1,\chi_a)^{-4}
     \bigg)^{1/2}.
\]
Now the first sum is $\Oh[2^{-j}]$ by \eqref{eq:2.2}, and the 
second sum is $\Oh[2^j]$ by \cite[Theorem 2]{GS} (with $z = -4$) 
and the subsequent comment about Siegel's theorem.  
Thus, the contribution from $a\in I_j$ to the sum in the lemma is 
$\Oh[1]$, and since there are $\Oh[\log x]$ choices for $j$, the 
lemma is proved.
\end{proof}

We remark that \cite[Section 10]{BZ} has a similar calculation as 
in Lemma \ref{lem:2.6}.

%%%%%%%%%%%%%%%%%%%%%%%%%%%%%%%%%%%%%%%%%%%%%%%%%%%%%%%%%%%%%%%%%%
%%%%%%%%%%%%%%%%%%%%%%%%%%%  SECTION 3 %%%%%%%%%%%%%%%%%%%%%%%%%%%
%%%%%%%%%%%%%%%%%%%%%%%%%%%%%%%%%%%%%%%%%%%%%%%%%%%%%%%%%%%%%%%%%%

\section{Proof of Theorem \ref{thm:1}}
 \label{sec:3}

Our proof begins with the observation that 
every positive integer has a unique representation of the form 
$an^2$, where $a$ and $n$ are positive integers with $a$ 
squarefree.
Thus, $(p - 1)(q - 1) = \oldsquare$ if and only if 
$p = am^2 + 1$ and $q = an^2 + 1$ for some squarefree $a$.
It follows that for all $x \ge 0$, 
\begin{align}
 \label{eq:3.1}
  \SS(x + 1)
  =
    \sum_{a \le x} 
     \mu(a)^2
      \sums[m, \, n \le \sqrt{x/a}][m \ne n] 
       \ind{\PP}(am^2 + 1)\ind{\PP}(an^2 + 1).
\end{align}

\subsection{The lower bound}
 \label{subsec:3.1}

Let $x \ge 4$ and consider a dyadic interval
\[
 I_y \defeq [y/2,y) \subset[1,x^{1/6}].
\]
Also let 
\begin{equation}
 \label{eq:3.2}
  N_{I_y}(a) \defeq \sum_{n \in I_y} \ind{\PP}(an^2 + 1).
\end{equation}
Letting $\mathscr{I}$ denote a collection of disjoint dyadic 
intervals $I_y$, we deduce from \eqref{eq:3.1} that 
\begin{equation}
 \label{eq:3.3}
  \SS(x + 1)
   \ge 
    \sum_{I_y \in \mathscr{I}}
     \sum_{a \le x/y^2}
      \mu(a)^2
       (N_{I_y}(a)^2 - N_{I_y}(a)).
\end{equation}
By the Cauchy--Schwarz inequality, for every $I_y \in \mathscr{I}$ 
we have
\begin{align}
 \label{eq:3.4}
  \bigg(
   \sum_{a \le x/y^2}
    \mu(a)^2N_{I_y}(a)
  \bigg)^2
   \le 
    \frac{x}{y^2}
     \sum_{a \le x/y^2} \mu(a)^2N_{I_y}(a)^2.
\end{align}

\begin{lemma}
 \label{lem:3.1}
Given an interval $I_y = [y/2,y)$ and an integer $a$, let 
$N_{I_y}(a)$ be as in \eqref{eq:3.2}.
\textup{(}i\textup{)}
Uniformly for $2 \le y < \sqrt{x}$, we have 
\[
  \sum_{a \le x/y^2} 
   N_{I_y}(a)
    \ll 
     \frac{x}{y\log(x/y^2)}.
\]
\textup{(}ii\textup{)}
Uniformly for $2 \le y \le x^{1/6}$, we have 
\[
  \sum_{a \le x/y^2} 
   \mu(a)^2 N_{I_y}(a)
    \gg 
     \frac{x}{y\log x}.
\]
\end{lemma}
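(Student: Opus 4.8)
The two parts are essentially upper and lower bounds on a sum over squarefree $a$ of the count $N_{I_y}(a)=\sum_{n\in I_y}\ind{\PP}(an^2+1)$, where $y\le x^{1/6}$. The natural device is to swap the order of summation: writing the sum over $a\le x/y^2$ and $n\in I_y$ as a double sum, then fixing $n$ and letting $a$ vary, one sees that $an^2+1$ runs (with $a$) over an arithmetic progression modulo $n^2$ (namely the residue class $1 \bmod n^2$), with $a$ up to $x/y^2$, so $an^2+1$ up to roughly $x$. Thus each inner sum over $a$ is counting primes $\le x$ in the progression $1\bmod n^2$ with the extra constraint that $(p-1)/n^2$ is squarefree.

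\medskip

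For part (i), I would simply drop the squarefreeness and the Möbius weight:
\[
 \sum_{a\le x/y^2} N_{I_y}(a)
  \le \sum_{n\in I_y}\#\{p\le an^2+1\le x+1:\ p\equiv 1\bmod n^2\}
  \le \sum_{n\in I_y}\pi(x+1;n^2,1).
\]
Apply Lemma~\ref{lem:2.2} (Brun--Titchmarsh) with $k=n^2$: since $n<y<\sqrt x$ we have $n^2<x$ (for $x$ large; the range $2\le y<\sqrt x$ needs $n^2$ genuinely smaller than $x$, which holds as $n<y\le\sqrt{x}$ forces $an^2+1\le x$ only when $a\ge1$, i.e. $n^2\le x$), giving $\pi(x+1;n^2,1)\ll x/(\phi(n^2)\log(x/n^2))$. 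Since $n\in[y/2,y)$ we have $\log(x/n^2)\asymp\log(x/y^2)$, and $\phi(n^2)=n\phi(n)\gg n^2/\log\log n$; more cleanly, summing $\sum_{n\in I_y}1/\phi(n^2)\ll 1/y$ by the second bound of Lemma~\ref{lem:2.1}(i) (a dyadic tail estimate restricted to $n>y/2$). That yields the claimed $\ll x/(y\log(x/y^2))$. I should double-check the edge cases where $y$ is close to $\sqrt x$ so that $x/y^2$ is small — then there are $O(\sqrt x)$ terms total and the bound is easy directly.

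\medskip

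Part (ii) is the substantive half and I expect it to be the main obstacle, since it requires a genuine lower bound on primes in progressions, which is where Lemma~\ref{lem:2.4} (the Brun--Titchmarsh-plus-Siegel-model inclusion-exclusion) enters. The idea: for each fixed $n\in I_y$, the inner sum $\sum_{a\le x/y^2}\mu(a)^2\ind{\PP}(an^2+1)$ is exactly a sum of the shape appearing in Lemma~\ref{lem:2.4} with $k=n^2$, $b=1$, and upper limit $\approx x/n^2$ in place of $x/k$; rescaling, it counts squarefree-quotient primes up to $\approx x$ in the class $1\bmod n^2$. Lemma~\ref{lem:2.4} gives this is $\gg x/(\phi(n^2)\log x)$ provided $36n^2$ avoids multiples of the exceptional set $\mathcal D(x)$ and $n^2\le x^{1/3}$ — the latter is exactly why we restrict to $y\le x^{1/6}$. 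The trouble is the exceptional moduli: we cannot guarantee a good lower bound for every individual $n$. The fix is that $\mathcal D(x)$ has at most $D$ elements, all exceeding $\log x$, and all but one exceeding $x^\eta$; so the $n\in I_y$ for which $36n^2$ is a multiple of some bad modulus number at most $O(1)$ times the number of bad moduli up to $(6y)^2\le x^{1/3}$ divided by $\log x$-ish spacing — in any case $O_\eta(1)$ such $n$ when $y\le x^{\eta/3}$, and for the (finitely many) larger dyadic ranges up to $x^{1/6}$ one argues that only $O(1)$ bad $n$ arise from the single small exceptional modulus and $O(1)$ from each large one. Discarding these $O(1)$ exceptional $n$ from $I_y$ (which has $\gg y$ elements, $\gg x^{\text{something}}$, certainly $\to\infty$) loses a negligible fraction, so summing the good lower bounds over $n\in I_y$ and using $\sum_{n\in I_y}1/\phi(n^2)\gg 1/y$ (again via $\phi(n^2)=n\phi(n)\ll n^2$ and there being $\asymp y$ terms) gives the claimed $\gg x/(y\log x)$. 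The points needing care are: the rescaling in Lemma~\ref{lem:2.4} (its statement is in terms of $x$ and $k$ with $a\le x/k$, so one applies it with parameter $X=x$ and $k=n^2$, valid since $n^2\le x^{1/3}$), the uniformity of the "$O(1)$ exceptional $n$" count across all dyadic $I_y\subset[1,x^{1/6}]$ — handled by splitting off the at-most-one sub-$x^\eta$ exceptional modulus separately — and verifying that $y/2\ge 1$ so $I_y$ is nonempty, i.e. the smallest relevant $y$ is a fixed constant and one may absorb finitely many tiny cases into the implied constant.
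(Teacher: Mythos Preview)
Your approach to part (i) is correct and matches the paper's proof exactly: swap the order of summation, apply Brun--Titchmarsh (Lemma~\ref{lem:2.2}) with modulus $n^2$, and sum $\sum_{n\in I_y}1/\phi(n^2)\ll 1/y$ via Lemma~\ref{lem:2.1}(i).

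For part (ii) your overall strategy is also the paper's --- swap the order, apply Lemma~\ref{lem:2.4} with $k=n^2$ and $b=1$ (valid since $n^2\le y^2\le x^{1/3}$), and show that the exceptional $n$ contribute negligibly to $\sum_{n\in I_y}1/\phi(n^2)$ --- but your treatment of the exceptional set has a genuine gap. The claim that there are only $O_\eta(1)$ values of $n\in I_y$ with $36n^2$ divisible by some $d\in\mathcal D(x)$ is false. Take the single possible exceptional modulus $d_0\in(\log x,\,x^\eta]$: if $d_0$ is prime (which nothing rules out) then $d_0\mid 36n^2$ iff $d_0\mid n$, and the number of multiples of $d_0$ in $[y/2,y)$ is $\sim y/(2d_0)$, which for $y=x^{1/6}$ and $d_0$ near $\log x$ is of order $x^{1/6}/\log x$, not $O(1)$. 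Your remark about ``finitely many larger dyadic ranges up to $x^{1/6}$'' is also off: there are $\asymp\log x$ such dyadic intervals, not $O(1)$.

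The paper fixes this not by counting bad $n$ but by bounding their \emph{weighted} contribution directly. Writing $I_y'$ for the good $n$, one needs $\sum_{n\in I_y\setminus I_y'}1/\phi(n^2)=o(1/y)$. For each $d\in\mathcal D(x)$ the condition $d\mid 36n^2$ means $d\mid(6n)^2$, and the third bound in Lemma~\ref{lem:2.1}(i) gives
\[
\sum_{\substack{n\in I_y\\ d\mid(6n)^2}}\frac{1}{\phi(n^2)}
\;\ll\;\sum_{\substack{m\ge 3y\\ d\mid m^2}}\frac{1}{\phi(m^2)}
\;\ll\;\frac{d^{1/2}}{\phi(d)\,y}.
\]
Since every $d\in\mathcal D(x)$ exceeds $\log x$ and $\#\mathcal D(x)\le D$, summing over $d$ gives $\ll (\log\log x)/(y\sqrt{\log x})=o(1/y)$, and the claim $\sum_{n\in I_y'}1/\phi(n^2)\gg 1/y$ follows. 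Your argument can be repaired along these lines (or by the cruder observation that even $\ll y/\sqrt{\log x}$ bad $n$, each contributing $\ll(\log\log y)/y^2$, still sum to $o(1/y)$), but as written the $O(1)$ claim does not hold.
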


\begin{proof}
(i) We change the order of summation and apply Lemma 
\ref{lem:2.2}:
\[
 \sum_{a \le x/y^2} N_{I_y}(a)
  = \sum_{n \in I_y}
   \sum_{a \le x/y^2} 
    \ind{\PP}(an^2 + 1) 
     \ll 
      \sum_{n \in I_y}
       \pi(x; n^2, 1)
        \ll
         \sum_{n \in I_y}
          \frac{x}{\phi(n^2)\log(x/n^2)}.
\]
We have 
$
 \sum_{n \in I_y} 1/\phi(n^2) \ll 1/y
$
by the second bound in Lemma \ref{lem:2.1} (i).

(ii) Let $2 \le y \le x^{1/6}$ and let $I'_y$ be the subset of 
those $n \in I_y$ for which 
\[
 \sum_{a \le x/n^2}
  \mu(a)^2
   \ind{\PP}(an^2 + 1)
    > 
     \frac{x}{100\phi(n^2)\log x}.
\] 
Letting 
$
 N_{I_y'}(a) 
  \defeq 
   \sum_{n \in I_y'} \ind{\PP}(an^2 + 1)
$ 
we see, after switching the order of summation, that
\[
 \sum_{a \le x/y^2} \mu(a)^2 N_{I_y}(a)
  \ge 
   \sum_{a \le x/y^2} \mu(a)^2 N_{I_y'}(a)
    \ge
     \sum_{n \in I_y'} 
      \sum_{a \le x/n^2} 
       \mu(a)^2 
        \ind{\PP}(an^2 + 1),
\]
and hence
\[
 \sum_{a \le x/y^2} \mu(a)^2 N_{I_y}(a)
  > 
   \frac{x}{100\log x}
    \sum_{n \in I_y'}
     \frac{1}{\phi(n^2)}.
\]
We claim that 
\begin{equation}
 \label{eq:3.5}
 \sum_{n \in I_y'} \frac{1}{\phi(n^2)}
  \gg
   \frac{1}{y},
\end{equation}
whence the result.
The claim follows from the second bound in 
Lemma \ref{lem:2.1} (i) if $I_y' = I_y$, so let us 
assume that $I_y' \subsetneq I_y$.

If   
$
 n \in I_y\setminus I'_y
$
then $n^2 \le x^{1/3}$, and so if $x$ is sufficiently large
(as we assume), $36n^2$ is a multiple of an element of the 
``exceptional set'' $\mathcal{D}(x)$ of Lemma \ref{lem:2.4}.
Hence, by the third bound in Lemma \ref{lem:2.1} (i),
\begin{multline*}
 \sum_{n \in I_y\setminus I_y'}
  \frac{1}{\phi(n^2)}
   \le 
    \sum_{d \in \mathcal{D}(x)}
     \sums[n \in I_y][d \mid 36n^2]
      \frac{1}{\phi(n^2)}
       \ll
        \sum_{d \in \mathcal{D}(x)}
         \sums[n \in I_y][d \mid (6n)^2]
          \frac{1}{\phi((6n)^2)}
   \\
     \le 
      \sum_{d \in \mathcal{D}(x)}
       \sums[m \ge 3y][d \mid m]
        \frac{1}{\phi(m^2)}
         \ll
          \frac{1}{y}
           \sum_{d \in \mathcal{D}(x)}
            \frac{d^{1/2}}{\phi(d)}
             \ll
              \frac{\log\log x}{y(\log x)^{1/2}},
\end{multline*}
where the last bound holds because, by Lemma \ref{lem:2.4}, there 
are at most $D$ elements in $\mathcal{D}(x)$, and all elements in 
$\mathcal{D}(x)$ are greater than $\log x$.
Since our estimate is $\oh[1/y]$ as $x \to \infty$, we have 
\eqref{eq:3.5}, and so the lemma.
\end{proof}

\begin{proof}
[Deduction of the lower bound]
Combining \eqref{eq:3.4} with Lemma \ref{lem:3.1} (i) and (ii), 
we see that if $I_y = [y/2,y)$, then, uniformly for 
$(\log x)^2 \le y \le x^{1/6}$,
\begin{align*}
  \sum_{a \le x/y^2}\mu(a)^2(N_{I_y}(a)^2 - N_{I_y}(a))
 & \ge
    \frac{y^2}{x}
      \bigg(
       \sum_{a \le x/y^2} \mu(a)^2 N_{I_y}(a)
      \bigg)^2
     - \sum_{a \le x/y^2} N_{I_y}(a)
 \\
  & \gg 
     \frac{x}{(\log x)^2}.
\end{align*}
Letting   
$
  \mathscr{I} 
 = \{[2^{j-1},2^{j}) : (\log x)^2 \le 2^{j} \le x^{1/6}\} 
$
and applying \eqref{eq:3.3}, we conclude that 
\[
  \SS(x)
   \gg
    \sum_{I_y \in \mathscr{I}} \frac{x}{(\log x)^2}
     \gg \frac{x}{\log x}.
\]
\end{proof}

\subsection{The upper bound}
 \label{subsec:3.2}

By \eqref{eq:3.1} we have 
$
 \SS(x + 1) = 2\SS_1(x) + 2\SS_2(x),
$
where 
\begin{align}
 \label{eq:3.6}
  \begin{split}
   \SS_1(x)
  & \defeq 
     \sum_{a \le x^{2/3}} 
      \mu(a)^2
       \sum_{n \le \sqrt{x/a}}
        \hspace{3pt}
         \sum_{m < n}
          \ind{\PP}(am^2 + 1) 
           \ind{\PP}(an^2 + 1)
 \\
 & \le
    \sum_{a \le x^{2/3}} 
     \mu(a)^2
      \bigg(
       \sum_{n \le \sqrt{x/a}}
        \ind{\PP}(an^2 + 1)
      \bigg)^2
 \end{split} 
 \intertext{and}
  \label{eq:3.7}
 \begin{split}
 \SS_2(x) 
  & \defeq
   \sum_{x^{2/3} < a \le x} \mu(a)^2
    \sum_{n \le \sqrt{x/a}}
     \hspace{3pt}
      \sum_{m < n}
       \ind{\PP}(am^2 + 1) 
        \ind{\PP}(an^2 + 1) 
 \\
 & \le 
    \sum_{n < x^{1/6}}
     \hspace{3pt}
      \sum_{m < n}
       \sum_{a \le x/n^2}
        \hspace{3pt}
         \ind{\PP}(am^2 + 1)
          \ind{\PP}(an^2 + 1).
 \end{split}
\end{align}

\begin{lemma}
 \label{lem:3.2}
\textup{(}i\textup{)}
Uniformly for $x \ge 2$ and $1 \le a \le x^{2/3}$, we have 
\[
  \sum_{n \le \sqrt{x/a}}
   \ind{\PP}(an^2 + 1)
    \ll
     \frac{\sqrt{x/a}}{\log x}
      \frac{a}{\phi(a)}
       \prod_{2 < p \le \sqrt{x}}
        \br{1 - \frac{(-a/p)}{p}}.
\]
\textup{(}ii\textup{)}
Uniformly for $1 \le m < n < x^{1/3}$, we have
\[
  \sum_{a \le x/n^2} 
   \ind{\PP}(am^2 + 1)\ind{\PP}(an^2 + 1)
    \ll
     \frac{x}{(n\log x)^2}
      \cdot
       \frac{(m,n)}
            {\phi((m,n))}
        \cdot 
         \frac{n^2 - m^2}{\phi(n^2 - m^2)}.
\]
\end{lemma}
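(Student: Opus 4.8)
The plan is to derive both bounds from an upper bound sieve, using throughout that the summation variable runs only up to a fixed power of $x$: in part~(i) one has $\sqrt{x/a}\ge x^{1/6}$ since $a\le x^{2/3}$, and in part~(ii) one has $x/n^2\ge x^{1/3}$ since $n<x^{1/3}$. Two consequences recur. First, every logarithm that appears below is $\asymp\log x$. Second, any partial Euler product $\prod_{y<p\le z}\br{1-\frac{\nu(p)}{p}}$ with $0\le\nu(p)\le 2$, in which $y$ and $z$ are fixed powers of $x$, satisfies $\prod_{y<p\le z}\br{1-\frac{\nu(p)}{p}}\asymp 1$, because then $\sum_{y<p\le z}1/p\ll 1$. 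The latter point is what will let me replace the range of sieving primes that the sieve produces (a small power of the summation variable) by the range $[2,\sqrt{x}]$ appearing in the statement, at no cost.

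For part~(i) I would set $z=\sqrt{x/a}$, so that $an^2+1\le x+1$ for $n\le z$. A prime $an^2+1$ exceeding $z^{1/10}$ has no prime factor $\le z^{1/10}$, and $an^2+1\le z^{1/10}$ for at most $z^{1/20}$ values of $n$; hence $\sum_{n\le z}\ind{\PP}(an^2+1)$ is, up to $\Oh[z^{1/20}]$, at most the number of $n\le z$ with $an^2+1$ coprime to $\prod_{p\le z^{1/10}}p$. The sequence $\{an^2+1:n\le z\}$ has $\rho(p)$ forbidden residues modulo $p$, where $\rho(p)=0$ if $p\mid a$, where $\rho(2)\in\{0,1\}$ according to the parity of $a$, and where $\rho(p)=1+(-a/p)$ for odd $p\nmid a$; in every case $0\le\rho(p)\le 2<p$, and the remainder terms $\sum_{d\le D}\bigl|\#\{n\le z:d\mid an^2+1\}-\rho(d)z/d\bigr|$ are $\ll D^{1+o(1)}$. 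A standard upper bound sieve (the fundamental lemma, or Selberg's sieve) therefore gives a bound $\ll z\prod_{p\le z^{1/10}}\br{1-\frac{\rho(p)}{p}}$, which by the remark above is $\asymp z\prod_{p\le\sqrt{x}}\br{1-\frac{\rho(p)}{p}}$. Factoring this last product — Mertens' theorem for $\prod_{p\le\sqrt{x}}(1-1/p)\asymp 1/\log x$, the identity $\prod_{p\mid a}\frac{p}{p-1}=\frac{a}{\phi(a)}$, and the observation that $1-\frac{(-a/p)}{p-1}$ differs from $1-\frac{(-a/p)}{p}$ by a factor $1+\Oh[p^{-2}]$ — identifies it with $\asymp\frac{1}{\log x}\cdot\frac{a}{\phi(a)}\cdot\prod_{2<p\le\sqrt{x}}\br{1-\frac{(-a/p)}{p}}$, which is the assertion, the term $z^{1/20}$ being negligible against the main term, which is $\gg z/(\log x)^2$.

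For part~(ii) I would proceed in parallel with $A=x/n^2$ and the \emph{pair} of linear forms $am^2+1$ and $an^2+1$, reducing (up to $\Oh[A^{1/10}]$ exceptional $a$) to counting $a\le A$ for which $(am^2+1)(an^2+1)$ is coprime to $\prod_{p\le A^{1/10}}p$. Here the number $\rho^{\ast}(p)$ of forbidden residues modulo $p$ is obtained from the Chinese remainder theorem: $\rho^{\ast}(p)=0$ if $p\mid(m,n)$; $\rho^{\ast}(p)=1$ if $p$ divides exactly one of $m,n$, or if $p\nmid mn$ and $p\mid n^2-m^2$; and $\rho^{\ast}(p)=2$ if $p\nmid mn(n^2-m^2)$ (with $\rho^{\ast}(2)\le 1$ always). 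Since $\rho^{\ast}(p)\le 2$, the sieve gives a bound $\ll A\prod_{p\le A^{1/10}}\br{1-\frac{\rho^{\ast}(p)}{p}}\asymp A\prod_{p\le\sqrt{x}}\br{1-\frac{\rho^{\ast}(p)}{p}}$, and the goal is then to show that this product is $\asymp\frac{1}{(\log x)^2}\cdot\frac{(m,n)}{\phi((m,n))}\cdot\frac{n^2-m^2}{\phi(n^2-m^2)}$. I would do this by organising the Euler factors according to the four cases above, using $\prod_{2<p\le\sqrt{x}}(1-2/p)\asymp(\log x)^{-2}$ and the fact that $\{p:p\mid n^2-m^2\}$ is the disjoint union of $\{p\mid(m,n)\}$ and $\{p\nmid mn,\ p\mid n^2-m^2\}$ (a prime dividing exactly one of $m,n$ divides neither $n-m$ nor $n+m$); since $A/(\log x)^2=x/(n\log x)^2$ this yields the claimed bound.

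The step I expect to be the crux is keeping everything uniform in $a$ (resp.\ $m,n$). The sieve controls its remainder terms only up to level $z^{1-o(1)}$ (resp.\ $A^{1-o(1)}$), so it natively delivers $\prod_{p\le z^{1/10}}(\cdots)$ rather than $\prod_{p\le\sqrt{x}}(\cdots)$; the gap is bridged precisely because $\sqrt{x/a}\ge x^{1/6}$ and $x/n^2\ge x^{1/3}$ force the omitted primes into a range that is short on the $\log\log$ scale. The remaining work — the clean handling of the Euler factors at $p=2$ and at the primes dividing $a$ (resp.\ dividing $mn$), and checking that the discarded ``few $n$'' and ``few $a$'' contributions are dominated by the main term — is routine once all logarithms are known to be $\asymp\log x$.
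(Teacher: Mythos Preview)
Your approach is correct and matches the paper's: both parts are handled by an upper bound sieve (the paper cites Brun's sieve from Halberstam--Richert, Theorem~2.2) followed by the same Euler-factor bookkeeping you describe, including the identical case analysis for $\rho_{m,n}(p)$ and the replacement of $1-\frac{(-a/p)}{p-1}$ by $1-\frac{(-a/p)}{p}$ via a $1+\Oh[p^{-2}]$ factor. The only cosmetic difference is that the paper writes the sieve product directly over $p\le\sqrt{x}$, whereas you sieve up to $z^{1/10}$ (resp.\ $A^{1/10}$) and then extend using $\sum_{y<p\le z}1/p\ll 1$; this is the same observation, made explicit.
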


\begin{proof}
(i) Given $x \ge 2$ and $1 \le a \le x^{2/3}$, let 
\[
 \rho_a(p)
  \defeq 
   \#\{b \bmod p : ab^2 + 1 \equiv 0 \bmod p\}.
\]
A routine application of Brun's sieve \cite[Theorem 2.2]{HR} gives    
\[
 \sum_{n \le \sqrt{x/a}}
  \ind{\PP}(an^2 + 1)
   \ll 
    \sqrt{x/a} 
     \prod_{p \le \sqrt{x}} 
      \br{1 - \frac{\rho_a(p)}{p}}.
\]
Since $1-\rho_a(p)/p=(1-1/p)(1-(\rho_a(p)-1)/(p-1))$, 
Mertens' theorem gives 
% % % \begin{align*}
% % %   \prod_{p \le \sqrt{x}} 
% % %   \br{1 - \frac{\rho_a(p)}{p}}
% % %  & =
% % %     \prod_{p \le \sqrt{x}}
% % %      \br{1 - \frac{\rho_a(p) - 1}{p - 1}}
% % %       \br{\frac{p - 1}{p}}
% % %  \\
% % %  &  \ll
% % %      \frac{1}{\log x}
% % %       \prod_{p \le \sqrt{x}}
% % %        \br{1 - \frac{\rho_a(p) - 1}{p - 1}}.
% % % \end{align*}
\[
 \prod_{p \le \sqrt{x}} 
  { \br{1 - \frac{\rho_a(p)}{p}} }
    \ll
     \frac{1}{\log x}
      \prod_{2<p \le \sqrt{x}}
       {\br{1 - \frac{\rho_a(p) - 1}{p - 1}} }.
\]
Now, $\rho_a(p)-1=(-a/p)$  for odd $p\nmid a$, and $\rho_a(p)=0$ 
for $p\mid a$, hence 
% % % \begin{align*}
% % %  \prod_{p \le \sqrt{x}}
% % %   \br{1 - \frac{\rho_a(p) - 1}{p - 1}}
% % %  & \le
% % %     \frac{a}{\phi(a)}
% % %      \prod_{2 < p \le \sqrt{x}}
% % %       \br{1 - \frac{(-a/p)}{p - 1}}
% % %  \\
% % %  & \ll
% % %     \frac{a}{\phi(a)}
% % %      \prod_{2 < p \le \sqrt{x}}
% % %       \br{1 - \frac{(-a/p)}{p}},
% % % \end{align*}
\[
 \prod_{2<p \le \sqrt{x}}
  { \br{1 - \frac{\rho_a(p) - 1}{p - 1}} }
   \le
    \frac{a}{\phi(a)}
     \prod_{2 < p \le \sqrt{x}}
     {\br{1 - \frac{(-a/p)}{p - 1}} },
\]
which proves the inequality in the lemma with $p - 1$ in the 
denominator instead of $p$.
But
$
 1 - (-a/p)/(p - 1)
  =
   \br{1 - (-a/p)/p}
    \br{1 + \Oh[1/p^2]} 
$
so the bound in the lemma holds.

(ii) Given $1 \le m < n < x^{1/3}$, let 
\[
 \rho_{m,n}(p)
  \defeq 
   \#\{b \bmod p : (bm^2 + 1)(bn^2 + 1) \equiv 0 \bmod p\}.
\]
Again by Brun's sieve \cite[Theorem 2.2]{HR}, 
\[
 \sum_{a \le x/n^2}
  \ind{\PP}(am^2 + 1)\ind{\PP}(an^2 + 1)
   \ll 
    \frac{x}{n^2}
     \prod_{p \le \sqrt{x}} 
      \br{1 - \frac{\rho_{m,n}(p)}{p}}.
\]
By Mertens' theorem we have
\begin{align*}
 \prod_{p \le \sqrt{x}}
  \br{1 - \frac{\rho_{m,n}(p)}{p}}
 & =
    \prod_{p \le \sqrt{x}}
     \br{1 + 
      \frac{p(2 - \rho_{m,n}(p)) - 1}{(p - 1)^2}}
        \br{\frac{p - 1}{p}}^2
 \\
 & \ll
    \frac{1}{(\log x)^2}
     \prod_{p \le \sqrt{x}}
      \br{1 
     + \frac{p(2 - \rho_{m,n}(p)) - 1}{(p - 1)^2}}.
\end{align*}

Now, for any prime $p$ we have  
\begin{align*}
  \rho_{m,n}(p) 
  =
   \begin{cases}
    2 & \text{if $p \nmid mn(m^2 - n^2)$,} \\
    1 & \text{if $p \mid mn(m^2 - n^2)$ and $p \nmid (m,n)$,} \\
    0 & \text{if $p \mid (m,n)$,}
   \end{cases}   
\end{align*}
hence 
\begin{align*}
 \prod_{p \le \sqrt{x}}
  \br{1 + \frac{p(2 - \rho_{m,n}(p)) - 1}{(p - 1)^2}}
 & \le
    \prod_{p \mid (m,n)}
     \br{\frac{p}{p - 1}}^2
      \prods[p \mid m^2 - n^2][p \nmid (m,n)]
       \frac{p}{p - 1}
 \\
 & =
    \prod_{p \mid (m,n)}
     \frac{p}{p - 1}
      \prod_{p \mid (m^2 - n^2)}
       \frac{p}{p - 1}. 
\end{align*}
Combining gives the result.
\end{proof}

\begin{proof}
[Deduction of the upper bound]
By \eqref{eq:3.6}, Lemma \ref{lem:3.2} (i) and Lemma 
\ref{lem:2.6}, we have
\[
 \SS_1(x)
  \ll
   \frac{x}{(\log x)^2}
    \sum_{a \le x^{2/3}} 
     \frac{a\mu(a)^2}{\phi(a)^2}
      \prod_{2 < p \le \sqrt{x}}
       \br{1 - \frac{(-a/p)}{p}}^2
        \ll 
         \frac{x}{\log x}.
\]
By \eqref{eq:3.7} and Lemma \ref{lem:3.2} (ii) we have
\[
 \SS_2(x) 
  \ll 
   \frac{x}{(\log x)^2}
    \sum_{n < x^{1/6}}
     \hspace{3pt}
      \frac{1}{n^2}
       \sum_{m < n}
        \hspace{3pt}
         \frac{(m,n)}{\phi((m,n))}
          \cdot 
           \frac{n^2 - m^2}{\phi(n^2 - m^2)}.
\]
To bound the double sum, we write $g = (m,n)$, $m = gm_1$, 
$n = gn_1$ and change the order of summation to obtain
\begin{align*}
  & \sum_{g \le x^{1/6}} \frac{1}{g^2}
     \hspace{3pt}
      \sum_{n_1 \le x^{1/6}/g}
       \frac{1}{n_1^2}
        \hspace{3pt}
         \sums[m_1 < n_1][(m_1,n_1) = 1] 
          \hspace{3pt}
           \frac{g}{\phi(g)}
            \cdot 
             \frac{g^2(n_1^2 - m_1^2)}{\phi(g^2(n_1^2 - m_1^2))}
 \\
  & \hspace{60pt} \le
    \sum_{g \le x^{1/6}} \frac{1}{\phi(g)^2}
     \hspace{3pt}
      \sum_{n_1 \le x^{1/6}/g}
       \frac{1}{n_1^2}
        \hspace{3pt}
         \sums[m_1 < n_1][(m_1,n_1) = 1] 
          \hspace{3pt}
           \frac{n_1^2 - m_1^2}{\phi(n_1^2 - m_1^2)}.
\end{align*}
This is equal to $\Oh[\sum_{n_1 \le x} 1/n_1] = \Oh[\log x]$
by Lemma \ref{lem:2.1} (ii).
Recalling that $\SS(x) = 2\SS_1(x) + 2\SS_2(x)$ and combining gives
\[
 \SS(x) \ll \SS_1(x) + \SS_2(x) \ll \frac{x}{\log x}.
\]
This completes the proof of the theorem.
\end{proof}

%%%%%%%%%%%%%%%%%%%%%%%%%%%%%%%%%%%%%%%%%%%%%%%%%%%%%%%%%%%%%%%%%%
%%%%%%%%%%%%%%%%%%%%%%%%%%% REFERENCES %%%%%%%%%%%%%%%%%%%%%%%%%%%
%%%%%%%%%%%%%%%%%%%%%%%%%%%%%%%%%%%%%%%%%%%%%%%%%%%%%%%%%%%%%%%%%%

\end{document}